\theoremstyle{plain}
\newtheorem{theorem}{Theorem}
\newtheorem{claim}{Claim}
\newtheorem{lemma}{Lemma}
\newtheorem{fact}{Fact}
\newtheorem{corollary}{Corollary}
\theoremstyle{definition}
\newtheorem{definition}{Definition}
\newtheorem{notation}{Notation}
\newcommand{\reduced}{\operatorname{reduced}}
\newcommand{\rii}{{\rm{RI\!I}}}
\newcommand{\riii}{{\rm{RI\!I\!I}}}
\newcommand{\iii}{\operatorname{I\!I\!I}}
\newcommand{\ri}{{\rm{RI}}}
\begin{document}
\title[New deformations on spherical curves and \"{O}stlund Conjecture]{New deformations on spherical curves and \"{O}stlund Conjecture}
\author{Megumi Hashizume}
\address{4-21-1 Nakano, Nakano-ku, Tokyo, Japan 164-8525, Meiji University Organization for the Strategic Coordination of Research
 and Intellectual Properties}
\address{Current address: Takabatakechyo ,Nara-shi, Nara-ken, Japan, 8528, Nara University of Education Center for Educational Research of Science and Mathematics}
\email{hashizume.megumi.y9@nara-edu.ac.jp}
\author{Noboru Ito}
\address{Graduate School of Mathematical Sciences, The University of Tokyo, 3-8-1, Komaba, Meguro-ku, Tokyo 153-8914, Japan}
\email{noboru@ms.u-tokyo.ac.jp}
\keywords{spherical curve; homotopy; Reidemeister move; \"{O}stlund conjecture}
\footnote{MSC2010: Primary: 57R42, Secondary: 05C12, 57M99}
\date{\today}
\maketitle

\begin{abstract}
In \cite{FHIKM}, a deformation of spherical curves called deformation type $\alpha$ was introduced.
Then, it was showed that 
if two spherical curves $P$ and $P'$ are equivalent under the relation consisting of deformations of type $\ri$ and type $\riii$ up to ambient isotopy, and satisfy certain  conditions, then $P'$ is obtained from $P$ by a finite sequence of deformations of type $\alpha$.
In this paper, we introduce a new type of deformations of spherical curves, called deformation of type $\beta$.
The main result of this paper is: Two spherical curves $P$ and $P'$ are equivalent under (possibly empty) deformations of type $\ri$ and a single deformation of type $\riii$ up to ambient isotopy if and only if $\reduced(P)$ and $\reduced(P')$ are transformed each other by exactly one deformation which is of type $\riii$, type $\alpha$, or type $\beta$ up to ambient isotopy, where $\reduced(Q)$ is the spherical curve which does not contain a $1$-gon obtained from a spherical curve $Q$ by applying deformations of type $\ri$ up to ambient isotopy.
\end{abstract}

\section{Introduction}\label{intro}
A spherical curve is the image of a generic immersion of a circle into a $2$-sphere.
Every spherical curve is transformed into the simple closed curve by a finite sequence of deformations, each of which is either one of type $\ri$, type $\rii$, or type $\riii$ that is a replacement of a part of the spherical curve contained in a disk as shown in Figure~\ref{reidemei} and ambient isotopies.  The deformations of type $\ri$, type $\rii$, and type $\riii$ are obtained from Reidemeister moves of type $\Omega_1$, type $\Omega_2$, and type $\Omega_3$ on knot diagrams by ignoring over/under informations near the crossing points.
We note that each of the deformations means a replacement of a part of a immersed circle fixing the ambient 2-sphere.

\begin{figure}[h!]
\includegraphics[width=10cm]{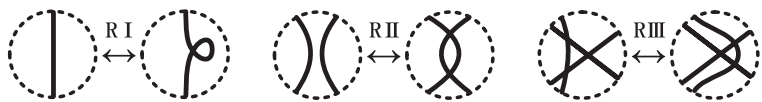}
\caption{Deformations of type $\ri$, type $\rii$, and type $\riii$}\label{reidemei}
\end{figure}

In 2001, from viewpoints of singularity theory, \"{O}stlund \cite{ostlund} raised a problem that deformations of type $\ri$ and type $\riii$ would be sufficient to describe a homotopy from every generic immersion $S^1 \to \mathbb{R}^2$ to the simple closed curve.  In the same paper \cite{ostlund}, from viewpoints of mathematical physics and topology, \"{O}stlund proved that if any Vassiliev-type function is invariant under $\Omega_1$ and $\Omega_3$, then it is invariant under $\Omega_2$.
It was an evidence to support the above problem.

%

In 2014, Hagge and Yazinski \cite{HY} found a counterexample to this problem with $16$ double points.
Recently, in \cite{IT}, Takimura and the second author of this paper obtained a counterexample with $15$ double points and infinitely many counterexamples, and they further showed that there exist infinitely many equivalence classes of spherical curves under the relation consisting of deformations of type $\ri$ and type $\riii$ up to ambient isotopy.
For more details, see \cite{ITTT}.
However, their arguments work for a restricted class of spherical curves.
It is still difficult to detect whether a given pair of spherical curves are equivalent under the relation consisting of deformations of type $\ri$ and type $\riii$ up to ambient isotopy.
 In \cite{FHIKM}, a deformation of spherical curve, called deformation of type $\alpha$ (Figure~\ref{alpha}) which is a combination of deformations of type $\ri$ and type $\riii$, was introduced and it was shown that if two spherical curves $P$ and $P'$ are equivalent under the relation consisting of deformations of type $\ri$ and type $\riii$ up to ambient isotopy, and satisfy certain technical conditions, then $P'$ is obtained from $P$ by a finite sequence of deformations of type $\alpha$.
In Section~\ref{SecPreliminaries} of this paper, we introduce another type of deformations of sperical curves, called deformation of type $\beta$ (Figure~\ref{beta}).
The main result of this paper (Theorem~\ref{prop1}) shows that the three deformations (deformations of type $\riii$, type $\alpha$ and type $\beta$) are used to describe the equivalence class.
For the statement of Theorem~\ref{prop1}, we introduce one terminology.

A spherical curve $Q$ is called \emph{\ri-minimal} if $Q$ does not contain a $1$-gon.
For a deformation of type $\ri$ in Figure~\ref{reidemei}, we say that the type of the deformation from the left (the right, resp.) to the right (the left, resp.) is of type $\ri^{+}$ ($\ri^{-}$, resp.).
For a spherical curve $Q$, let $\reduced(Q)$ be an $\ri$-minimal spherical curve obtained from $Q$ by successively applying deformations of type $\ri^{-}$.

\begin{theorem}\label{prop1}   
Two spherical curves $P$ and $P'$ are equivalent under (possibly empty) deformations of type $\ri$ and a single deformation of type $\riii$ up to ambient isotopy if and only if $\reduced(P)$ and $\reduced(P')$ are transformed each other by exactly one deformation which is of type $\riii$, type $\alpha$, or type $\beta$ up to ambient isotopy.
\end{theorem}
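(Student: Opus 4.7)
The plan is to treat the two directions separately, with the reverse direction forming the substantive part of the argument. For the \emph{if} direction, assume $\reduced(P)$ and $\reduced(P')$ are related by a single deformation of type $\riii$, $\alpha$, or $\beta$. By the construction in Section~\ref{SecPreliminaries}, each of type $\alpha$ and type $\beta$ is realized as a finite sequence of deformations of type $\ri$ containing exactly one deformation of type $\riii$. Concatenating this with the $\ri^{-}$ sequence from $P$ to $\reduced(P)$ and the $\ri^{+}$ sequence from $\reduced(P')$ to $P'$ yields a sequence from $P$ to $P'$ made up of deformations of type $\ri$ together with exactly one deformation of type $\riii$.

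For the \emph{only if} direction, let a sequence realizing the hypothesis be given, and locate the unique deformation of type $\riii$ to split the sequence as
$$P \xrightarrow{\ri^{*}} Q \xrightarrow{\riii} R \xrightarrow{\ri^{*}} P',$$
where $\ri^{*}$ denotes a (possibly empty) sequence of deformations of type $\ri$. Using that the $\ri$-minimal representative of an $\ri$-equivalence class of spherical curves is unique up to ambient isotopy (a fact verifiable via chord-diagram arguments on $\reduced$), we obtain $\reduced(P) = \reduced(Q)$ and $\reduced(R) = \reduced(P')$ up to ambient isotopy. The theorem therefore reduces to the following local claim: if spherical curves $Q$ and $R$ differ by a single deformation of type $\riii$, then $\reduced(Q)$ and $\reduced(R)$ differ by exactly one deformation of type $\riii$, type $\alpha$, or type $\beta$.

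To prove this local claim, fix the triangular disk $T$ supporting the $\riii$ move in $Q$ (equivalently in $R$) and enumerate the local configurations of $T$ together with the $1$-gons of $Q$ that share a crossing with the boundary of $T$. Any $1$-gon disjoint from a neighborhood of $T$ can be collapsed on both sides of the $\riii$ without affecting $T$, so it suffices to analyze the interaction of $T$ with adjacent $1$-gons. When no $1$-gon touches $T$, the $\riii$ descends directly to a deformation of type $\riii$ on $\reduced(Q) \to \reduced(R)$. When exactly one strand on the boundary of $T$ closes into a $1$-gon, simultaneously collapsing this $1$-gon and performing the $\riii$ produces the local model of a deformation of type $\alpha$ (Figure~\ref{alpha}). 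When two or more strands incident to $T$ form $1$-gons, the combined collapse yields either another deformation of type $\alpha$ or the new deformation of type $\beta$ (Figure~\ref{beta}); the latter outcomes are precisely the configurations that motivate the introduction of type $\beta$ in Section~\ref{SecPreliminaries}.

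The main obstacle will be the case analysis itself: one must enumerate the local configurations of $T$ and its neighboring $1$-gons up to the dihedral symmetry of $T$, up to the two sides on which each strand can lie, and up to ambient isotopy, and then verify in each case that the move induced on the reduced curves is exactly one of $\riii$, $\alpha$, or $\beta$ (and not a composite of several such moves). A subtlety is that reducing $Q$ can create new $1$-gons that were hidden in $Q$ but become accessible after the first round of collapses, so the cascades of reductions on $Q$ and $R$ must be tracked in parallel to confirm that the resulting deformation on the reduced curves remains a single move of the prescribed type. Once this case analysis is complete, both directions of the theorem follow at once.
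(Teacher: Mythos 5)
Your skeleton matches the paper's: the \emph{if} direction via the decomposition of $\alpha$ and $\beta^{\pm}(m)$ into deformations of type $\ri$ plus one deformation of type $\riii$, and the \emph{only if} direction by splitting the sequence at the unique $\riii$ and reducing to the claim that a single $\riii$ between $Q$ and $R$ induces exactly one move of type $\riii$, $\alpha$, or $\beta$ between $\reduced(Q)$ and $\reduced(R)$. The gap is in your proof of that core claim, and it is not merely that the case analysis is left unfinished: the analysis is organized around the wrong data. First, you never choose a direction for the $\riii$. The paper exchanges $P$ and $P'$ so that the prime-factor count satisfies $f_c(P_j)\ge f_c(P_{j-1})$; by Lemma~\ref{deference_fc} this forces the three crossings of the triangle to be non-nugatory, which is exactly what guarantees that the $\ri^{-}$ reduction of $P_{j-1}$ is supported away from the disk $D$ containing the triangle and hence commutes with the $\riii$ (Claim~\ref{cycle}). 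Without such a normalization, ``performing the $\riii$ on the reduced curves'' is not even well defined, since a nugatory triangle crossing can be destroyed by the reduction.

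Second, enumerating the $1$-gons of $Q$ adjacent to $T$ does not detect the $\alpha$ and $\beta$ configurations. The reduced curve $\reduced(Q)$ is $\ri$-minimal and so has \emph{no} $1$-gons at all; the $1$-gon absorbed by an $\alpha^{-}$ appears only \emph{after} the $\riii$ is performed, and what determines whether it appears is whether a complementary arc $g(I_{\lambda})$ (the portion of the reduced curve outside $D$ joining two strands of the triangle) is a simple arc. The paper's case division is accordingly by the number $\delta\in\{1,2,3\}$ of components of $P_{j-1}\setminus D$ and by how many of the $g(I_{\lambda})$ are simple arcs: none gives $\riii$, one gives $\alpha^{-}$, two give $\beta^{-}$. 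Finally, $\beta^{\pm}(m)$ is not a local move: it is a connected sum with $\infty^{m}\sharp 3_1$ for an arbitrary $m\ge 0$, and the paper identifies it by exhibiting a connected sum decomposition $\reduced(P_{j-1})^{\bigstar\bigstar}\sharp\,\infty^{m}\sharp 3_1$, not by inspecting a neighborhood of $T$. A finite enumeration of local configurations around the triangle, even up to its dihedral symmetry, cannot produce the integer $m$; this is precisely the ``cascade'' phenomenon you flag as a subtlety, and resolving it requires the prime-decomposition machinery of Lemmas~\ref{prime_alpha}--\ref{deference_fc} rather than further local case-checking.
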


\begin{corollary}\label{thm1}
Let $P$ and $P'$ be two spherical curves.  Then, $P$ and $P'$ are equivalent under deformations of type $\ri$ and type $\riii$ up to ambient isotopy if and only if 
there is a sequence of $\ri$-minimal spherical curves $P_{0}(=\reduced(P)), P_{1},\dots,P_{n}(=\reduced(P'))$
such that $P_{i+1}$ is obtained from $P_{i}(i=0,1,\dots,n-1)$ by a deformation of type $\riii$, type $\alpha$, or type $\beta$ up to ambient isotopy.

\end{corollary}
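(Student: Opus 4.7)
The plan is to derive this corollary directly from Theorem~\ref{prop1} by induction on the number of type~$\riii$ deformations in a given sequence relating $P$ and $P'$, since Theorem~\ref{prop1} is exactly the atomic statement dealing with one type~$\riii$ move together with arbitrarily many type~$\ri$ moves.

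For the ``only if'' direction, I fix a sequence $P = Q_0 \to Q_1 \to \dots \to Q_m = P'$ of type~$\ri$ and type~$\riii$ deformations (up to ambient isotopy), locate the type~$\riii$ moves at positions $j_1 < j_2 < \dots < j_n$, and group the sequence into $n$ blocks, each consisting of a (possibly empty) string of type~$\ri$ deformations followed by a single type~$\riii$ deformation, plus a final tail of type~$\ri$ deformations. Setting $P_0 = \reduced(P)$ and $P_k = \reduced(Q_{j_k+1})$ for $k = 1, \dots, n$, and applying Theorem~\ref{prop1} to the endpoints of each block, I conclude that $P_{k-1}$ and $P_k$ are related by a single deformation of type~$\riii$, type~$\alpha$, or type~$\beta$ up to ambient isotopy, while the final tail of only type~$\ri$ moves ensures that $P_n$ is ambient isotopic to $\reduced(P')$ as required.

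For the ``if'' direction, given a sequence $P_0 = \reduced(P), P_1, \dots, P_n = \reduced(P')$ of $\ri$-minimal curves linked by deformations of type~$\riii$, type~$\alpha$, or type~$\beta$, I apply Theorem~\ref{prop1} in the backward direction to each consecutive pair $(P_i, P_{i+1})$ to realize the step as a sequence of type~$\ri$ deformations together with a single type~$\riii$ deformation. Concatenating these with the type~$\ri^{-}$ deformations carrying $P$ to $\reduced(P)$ and the type~$\ri^{+}$ deformations carrying $\reduced(P')$ back to $P'$ produces the desired sequence of type~$\ri$ and type~$\riii$ deformations from $P$ to $P'$.

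The one subtle point that needs care is that $\reduced(Q)$ denotes only a choice of $\ri$-minimal curve obtainable from $Q$ by $\ri^{-}$ deformations, so the block decomposition in the ``only if'' direction relies on knowing that any two such choices are ambient isotopic. This confluence of $\ri^{-}$ reduction on spherical curves is either already established in the preliminaries or follows from a short diamond-style argument; once granted, the corollary reduces to bookkeeping on top of Theorem~\ref{prop1}, so I expect no serious obstacle beyond that observation.
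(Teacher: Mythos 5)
Your proposal is correct and is essentially the argument the paper intends: the corollary is stated without an explicit proof precisely because it follows from Theorem~\ref{prop1} by the block decomposition you describe, with the confluence of $\ri^{-}$ reduction supplied by Fact~\ref{factITw} in the preliminaries. The only quibble is a harmless indexing ambiguity in your choice of $P_k = \reduced(Q_{j_k+1})$ versus $\reduced(Q_{j_k})$, which Fact~\ref{factITw} renders irrelevant.
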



We note that Kobayashi-Kobayashi \cite{KK} introduced a new invariant called a stable double point number of pairs of spherical curves and proved that the invariant is non-trivial in a sense by using Theorem~\ref{prop1}.


\section{Preliminaries}\label{SecPreliminaries}
In this section, we introduce some terminologies, notations and facts related to this work.
We start with the following fact.

\begin{fact}[\cite{ITw, K}]\label{factITw}
For any spherical curve $P$, any pair of $\ri$-minimal spherical curves obtained from $P$ by applying deformations of type $\ri^{-}$ are mutually ambient isotopic.
\end{fact}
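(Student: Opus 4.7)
The plan is to establish a Church--Rosser style confluence for the rewriting system on spherical curves whose single rule is one deformation of type $\ri^-$, working modulo ambient isotopy; then any two $\ri$-minimal spherical curves reached from $P$ must coincide up to ambient isotopy. Termination of the system is immediate: each type $\ri^-$ move decreases the number of double points by one, so every sequence starting from $P$ halts at an $\ri$-minimal curve after at most $\#\{\text{crossings of } P\}$ steps.

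The substance of the argument is local confluence (the diamond property). Suppose $P$ has two distinct $1$-gons $D_1, D_2$ with apex crossings $c_1, c_2$, and let $P_i$ denote the result of applying the type $\ri^-$ move at $D_i$. I would split into two cases. Case (a), $c_1 \ne c_2$: because a $1$-gon is, by definition, a face of the underlying $4$-valent plane graph whose boundary circuit carries a single vertex and a single edge, its closed disk meets no other vertex of $P$. Hence the move at $c_1$ is supported in an arbitrarily small neighborhood of $\overline{D_1}$ and touches neither $c_2$ nor any edge at $c_2$; thus $D_2$ remains a $1$-gon in $P_1$, and symmetrically $D_1$ remains a $1$-gon in $P_2$. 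Performing the second move from each side yields the very same spherical curve. Case (b), $c_1 = c_2$: the four edge-germs at a crossing can form adjacent-edge loops in at most two complementary ways, and the coexistence of two $1$-gons at one crossing forces $P$ to be the figure-eight curve on $S^2$. Both reductions then produce the simple closed curve, so $P_1$ and $P_2$ are already ambient isotopic.

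With termination and local confluence in hand, Newman's lemma promotes these to global confluence up to ambient isotopy, and the fact follows. The main obstacle I expect to navigate carefully is the verification of case (a): writing down precisely that a $1$-gon face contains no other vertex or edge of the curve in its interior, and using this to argue that the local model used to define the type $\ri^-$ move really fits inside a disk disjoint from the rest of $P$, so that the two moves commute on the nose (not merely up to ambient isotopy). The figure-eight degeneracy in case (b) should be dispatched quickly once the combinatorial constraint on pairs of adjacent-edge loops at a single crossing is stated cleanly.
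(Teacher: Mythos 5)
Your argument is correct, but note that the paper itself offers no proof of this statement: it is imported as a Fact with citations to Ito--Takimura and Khovanov, so there is no in-paper argument to compare against. What you have written is essentially the standard proof that lives in those references: uniqueness of the reduced (``doodle-minimal'') curve via termination plus local confluence and Newman's lemma. Your two cases are handled correctly. In case (a), the key observation---that the open $1$-gon face contains no point of the curve and its boundary consists of a single loop edge and its apex, so the closed $1$-gons at distinct crossings are disjoint and the two $\ri^-$ moves have disjoint supports and commute on the nose---is exactly the point that needs care, and you have identified it. In case (b), two distinct $1$-gons at a single crossing must be bounded by two distinct loop edges (a single loop cannot bound faces on both sides for a connected $4$-valent curve), these consume all four edge-germs at the crossing, and connectivity then forces the figure-eight, for which both reductions give the trivial curve. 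One small refinement worth stating explicitly: since the rewriting is on ambient isotopy classes, you should observe that ambient isotopy preserves the face structure, so an $\ri^-$ move out of a class $[P]$ is determined by a choice of $1$-gon face of any fixed representative; with that said, Newman's lemma applies verbatim and the Fact follows. So your proposal supplies a correct, self-contained proof of a statement the paper takes on faith, by the same route as the cited sources.
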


A spherical curve is called {\it trivial} if it is a simple closed curve on $S^{2}$.
Let $P$ be a non-trivial spherical curve and $m_1$, $m_2$, \ldots, $m_k$ the double points of $P$.
Each component of $P \setminus \left({\displaystyle \bigcup^{k}_{i=1} m_i}\right)$ is called an \emph{arc}.

\begin{definition}[spherical curve with dots]
Let $Q$ be a non-trivial spherical curve with (possibly empty) specified points, ${\{p_{i}\}}_{i=1}^{r}$ called {\it dot(s)} such that: each $p_{i}$ is not a double point; and each arc contains at most one dot.
Then, $Q$ is called a \emph{spherical curve with dots ${\{p_{i}\}}_{i=1}^{r}$}.
\end{definition}

We note that a spherical curve sometimes denotes that a spherical curve with dots.

\begin{definition}[$\ri$-minimal spherical curve with dots]
Let $Q$ be a spherical curve with dots ${\{p_{i}\}}_{i=1}^{r}$.
We say that {\it the spherical curve with dots ${\{p_{i}\}}_{i=1}^{r}$ is $\ri$-minimal} if the boundary of each $1$-gon of $Q$ contains (exactly one) dot.

\end{definition}

It is easy to see that for each spherical curve $Q$ with dots ${\{p_{i}\}}_{i=1}^{r}$, we can obtain an $\ri$-minimal spherical curve with dots by applying deformations of type $\ri^{-}$ for $1$-gons without dots. 
Further it is easy to see that (cf. Fact~\ref{factITw}) such spherical curve with dots are mutually ambient isotopic as spherical curve with dots.


\begin{definition}[connected sum]
\label{connected}
Let $P$ ($P'$, resp.) be a spherical curve with dot(s) $\{p_{i}\}$ ($\{p'_{j}\}$, resp.), where $\{p_{i}\}$ means ${\{p_{i}\}}_{i=1}^{r}$ ($\{p_{j}\}$ means ${\{p_{j}\}}_{j=1}^{r'}$, resp.).
Let $d$ ($d'$, resp.) be a sufficiently small disk with the center $p_1$ ($p'_{1}$, resp.) where $d \cap P$ ($d' \cap P'$, resp.) consists of an arc properly embedded in $d$ ($d'$, resp.).
Let $\widehat{d}=cl(S^2 \setminus d)$ ($\widehat{d'}=cl(S^2 \setminus d'$), resp.) and $\widehat{P}=P \cap \widehat{d}$ ($\widehat{P'}=P' \cap \widehat{d'}$, resp.); let $h : \partial \widehat{d}\to\partial \widehat{d'}$ be a  homeomorphism such that $h(\partial \widehat{P})=\partial \widehat{P'}$ where $h$ is an orientation reversing or preserving homeomorphism
 (i.e., we consider the possibilities orientations with respect to both $S^{1}$ and $S^{2}$).
Then, we have a spherical curve $\widehat{P} \cup_h \widehat{P'}$ with dots $(\{p_{i}\}\setminus\{p_{1}\})\cup(\{p'_{j}\}\setminus\{p'_{1}\})$ (if $r>1$ or $r'>1$), or a spherical curve $\widehat{P} \cup_h \widehat{P'}$ without a dot (that is a canonical shperical curve) (if $r=1$ and $r'=1$) in the oriented $2$-sphere $\widehat{d} \cup_h \widehat{d'}$.
The spherical curve $\widehat{P} \cup_h \widehat{P'}$ with dots $(\{p_{i}\}\setminus\{p_{1}\})\cup(\{p'_{j}\}\setminus\{p'_{1}\})$ or without a dot in the oriented $2$-sphere is denoted by $P \sharp_{(p_1,~p'_1), h} P'$ 
(or $P \sharp_{(p_1,~p'_1)} P'$)
and is called a {\it{connected sum}} of the spherical curves $P$ (with dot(s) $\{p_{i}\}$) and $P'$ (with dot(s) $\{p'_{j}\}$) at the pair of dots $p_1$ and $p'_1$ (see Figure~\ref{connect}).
The circle corresponding to $\partial\widehat{P}(=\partial\widehat{P'})$ is called the {\it decomposing circle} of the connected sum.


\begin{figure}[htbp]
\includegraphics[width=8cm]{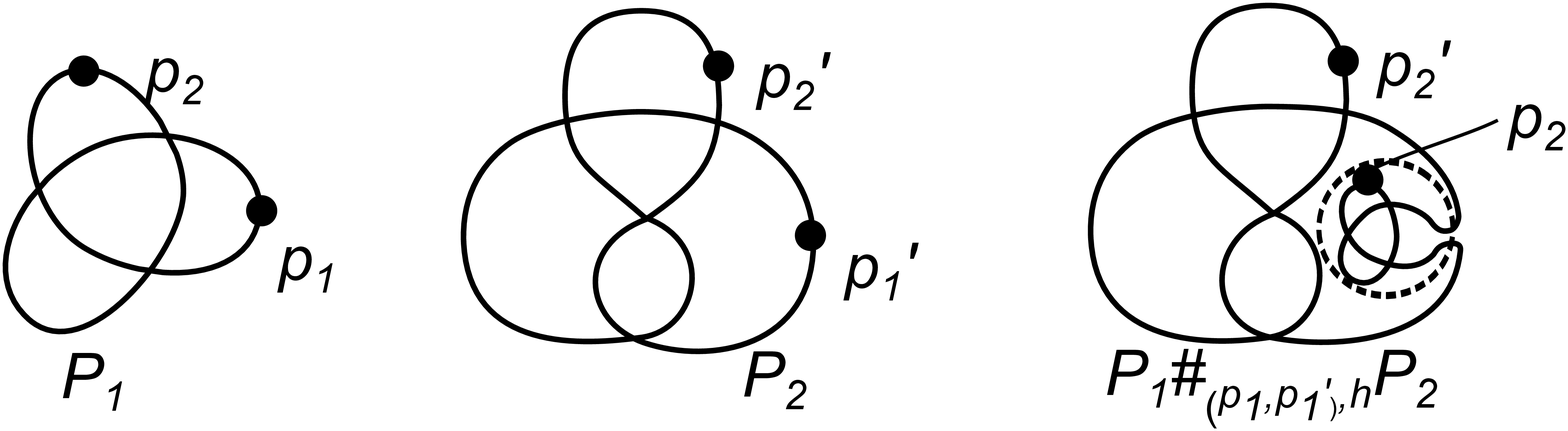}
\caption{An example of a connected sum $P \sharp_{(p_1,~p'_1), h} P'$ of two spherical curves $P$ with dots $\{p_{1},p_{2}\}$ and $P'$ with dots $\{p'_{1},p'_{2}\}$.
In the right figure, the dotted circle denotes the decomposing circle which means $\partial\widehat{P_{1}}(=\partial\widehat{P_{2}})$.}
\label{connect}
\end{figure}
\end{definition}

A spherical curve $P$ is {\it prime} if $P$ is nontrivial and is not a connected sum of two nontrivial spherical curves.
It is elementary to show that any spherical curve admits a prime connected sum decomposition, and hence a set of mutually disjoint decomposing circles corresponding to a prime connected sum decomposition.

\begin{definition}[deformations of type $\alpha$, deformation of type $\beta$]\label{alpha_beta}
For spherical curves $P$ and $P'$, we say that $P'$ is obtained from $P$ by a {\it deformation of type $\alpha$}, if $P'$ is obtained by replacing the part of $P$ contained in a disk as in Figure~\ref{alpha}.
We say that $P'$ is obtained from $P$ by a deformation of type $\alpha^{+}$ (type $\alpha^{-}$, resp.), if the number of double points of $P'$ is greater (less, resp.) than that of $P$.
See Figure~\ref{alpha}.

\begin{figure}[h!]
\centering
\includegraphics[width=6cm]{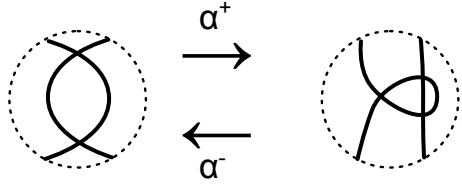}
\caption{deformation of type $\alpha$
}\label{alpha}
\end{figure}

Let $3_1$ be a prime spherical curve with exactly three double points with a dot $\{p\}$.
Then, we say that $P'$ is obtained from $P$ by a deformation of type $\beta^{+}$ or $\beta^{+}(m)$, where $m$ is a non-negative integer, if $P'$ is a connected sum $P \sharp_{(q,~p_{2})} ({\infty}^m \sharp_{(p_{1},p),h} 3_1)$, where $\{q\}$ is a dot in $P$, ${\infty}^m$ denotes an $\ri$-minimal spherical curve with two dots $\{p_{1}, p_{2}\}$ obtained from a trivial spherical curve by applying $m$ deformations of type $\ri^{+}$.  
Then a connected sum ${\infty}^m \sharp_{(p_{1},p), h} 3_1$ is $\ri$-minimal spherical curve with a dot $\{p_{2}\}$. 
We say that $P$ is obtained from the connected sum $P'=P \sharp ({\infty}^m \sharp 3_1)$ by the deformation of type $\beta^{-}$ or $\beta^{-}(m)$.
\end{definition}

By Definition~\ref{alpha_beta}, we may regard a deformation of type $\beta^{\pm}(m)$ as a deformation fixing an ambient 2-sphere.

\noindent
{\it Remark.} 
We note that $\alpha^{+}$ is expressed as a combination of a deformation of type $\ri^{+}$ and a deformation of type $\riii$, and that $\beta^{+}(m)$ is expressed as a combination of $m+3$ deformations of type $\ri^{+}$ and a deformation of type $\riii$.


\medskip
\noindent
{\bf Example.} Let $4_{1}$ be the spherical curve with a dot $\{p\}$ as in Figure~\ref{beta}~(a).
Then Figure~\ref{beta}~(b) is the list of spherical curves obtained from $4_{1}$ (with a dot $\{p\}$) by applying $\beta^{+}(2)$ up to ambient isotopy.

\begin{figure}[htbp]
\centering
\includegraphics[width=10cm]{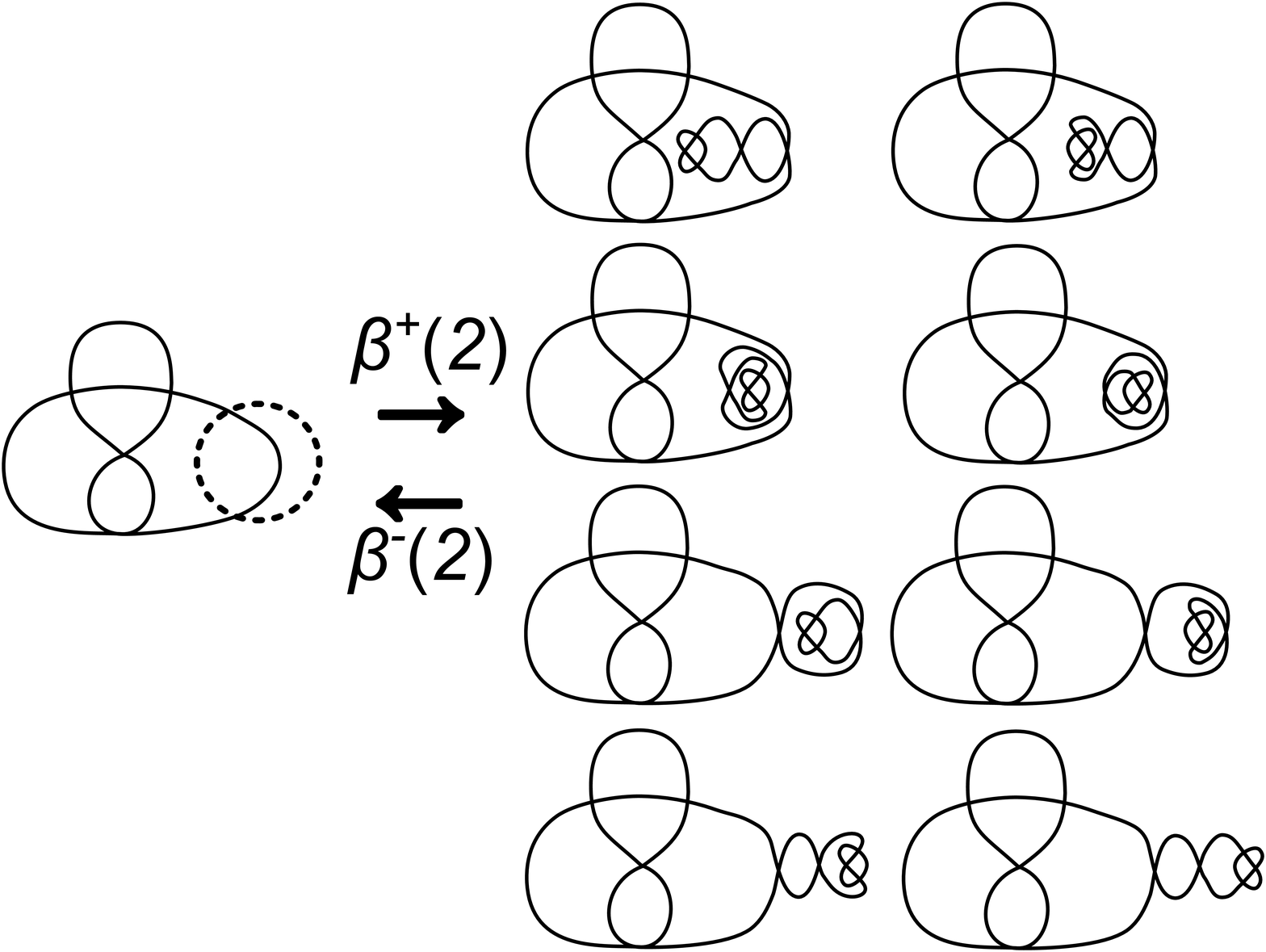}\\
 \ \ \ \ \ \ \ \ \ (a)\ \ \ \ \ \ \ \ \ \ \ \ \ \ \ \ \ \ \ \ \ \ \ \ \ \ \ \ \ \ \ \ \  (b)\ \ \ \ \ \ \ \ \ \ \ \ \ \ \ \ \ \ \ \ \ \ \ \ \ \ \ \ 
\caption{$\beta^{+}(2)$ and $\beta^{-}(2)$}\label{beta}
\end{figure}

\begin{definition}
Let $P$ be a spherical curve.
A double point $d$ is said to be {\it nugatory} if there exists a circle on $S^2$ which transversely intersects $P$ only in $d$ (Figure~\ref{nugatory}).
\end{definition}

\begin{figure}[htbp]
\centering
\includegraphics[width=3cm]{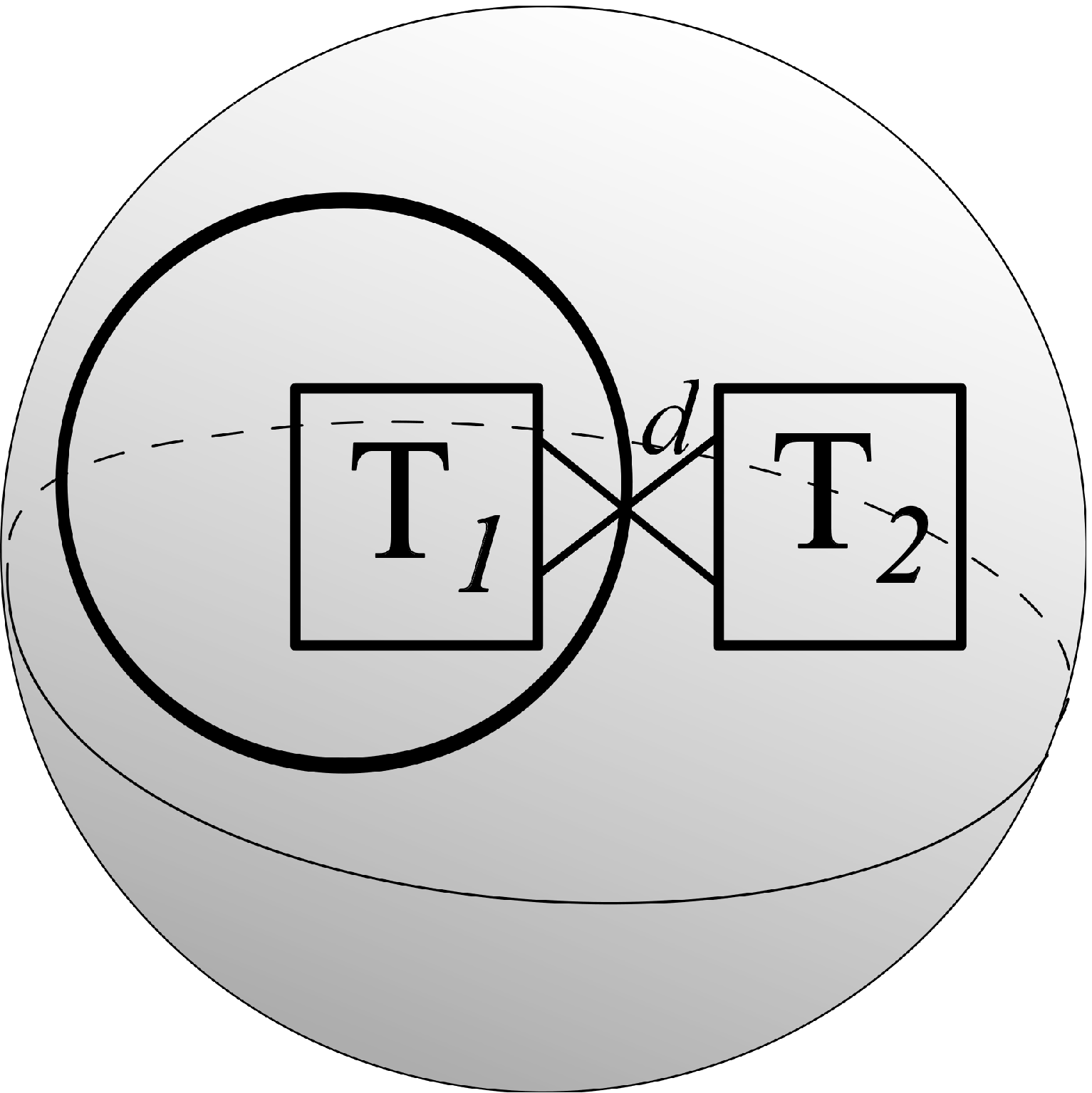}
\caption{}
\label{nugatory}
\end{figure}

\begin{notation}[$f_c$]\label{notation1}
For a spherical curve $P$, let $f_c(P)$ be the number of prime factors of $P$.
\end{notation} 

Let $Q$ and $Q'$ be non-trivial spherical curves.
Suppose that $Q'$ is obtained from $Q$ by a single deformation of type $\alpha^{+}$.
Let $d_1$, and $d_2$ be the double points of $Q$ relevant to the deformation of type $\alpha^{+}$.
Then we have Lemmas~\ref{prime_alpha} and \ref{difference_fc_alpha} below.
Lemmas~\ref{prime_alpha} and \ref{difference_fc_alpha} concerned with prime factors of $Q$.

\begin{lemma}\label{prime_alpha}
Suppose that $Q$ is prime.
Then $Q'$ is prime.
\end{lemma}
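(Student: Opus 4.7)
The plan is to argue by contrapositive: assume $Q'$ admits a non-trivial decomposing circle $C \subset S^{2}$ (so $C$ meets $Q'$ transversely in exactly two points and each component of $S^{2} \setminus C$ contains at least one double point of $Q'$), and from this produce a decomposing circle for $Q$, contradicting the primeness of $Q$. Let $D$ denote the disk in which the type-$\alpha^{+}$ deformation takes place, so that $Q$ and $Q'$ agree on $S^{2} \setminus \operatorname{Int}(D)$ and restrict on $D$ to the ``before'' and ``after'' local pictures of Figure~\ref{alpha}. I will put $C$ in general position with respect to $\partial D$ and choose $C$ to minimise $|C \cap \partial D|$ among all decomposing circles realising the given decomposition.

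The first step is to show that we may take $C \cap D = \emptyset$. This will be a standard innermost-arc argument in $D$: pick an arc $a$ of $C \cap D$ innermost in $D$, cobounding with a subarc $b \subset \partial D$ a disk $\Delta \subset D$ whose interior contains no other arc of $C \cap D$. If $a$ misses $Q' \cap D$, then I push $a$ across $\Delta$ slightly past $\partial D$, reducing $|C \cap \partial D|$ by two and contradicting minimality. If $a$ meets $Q' \cap D$ in one or two points, a short case analysis based on the small local model of $\alpha^{+}$ in Figure~\ref{alpha} will show that either the same reduction can be carried out after an elementary swap, or both intersection points of $C \cap Q'$ lie inside $D$; the last possibility is the one additional case I handle separately.

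In the exceptional sub-case where both points of $C \cap Q'$ lie in $D$, I will enumerate directly the few ways in which a simple closed curve can meet the local ``after'' picture of $\alpha^{+}$ in two transverse points. In each such configuration, replacing the ``after'' picture by the ``before'' picture inside $D$ produces a simple closed curve meeting $Q$ in exactly two points (since $d_{1}$ and $d_{2}$ are the only double points of $Q$ in $D$ and both lie on the $D$-side of $C$), and the non-triviality on both sides is inherited from the decomposition of $Q'$, giving a decomposing circle for $Q$ at once.

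Once $C \cap D = \emptyset$ is arranged, $D$ lies in one component $\Omega$ of $S^{2} \setminus C$; denote the other component by $\Omega'$. On $\Omega' \subset S^{2} \setminus \operatorname{Int}(D)$ the curves $Q$ and $Q'$ coincide, so $\Omega'$ contains a double point of $Q$ iff it contains the corresponding double point of $Q'$, which it does by hypothesis on $C$. On the $D$-side, $d_{1}, d_{2} \in D \subset \Omega$, so $\Omega$ also contains at least one double point of $Q$. Hence $C$ is a non-trivial decomposing circle for $Q$, contradicting the primeness of $Q$. I expect the main obstacle to be the innermost-arc step above: one must verify, case by case, that every way a short arc $a$ can cross the small $\alpha^{+}$ picture either permits a reduction of $|C \cap \partial D|$ or is already a ``decomposition inside $D$'' that transports directly to $Q$.
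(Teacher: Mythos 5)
Your proposal takes essentially the same route as the paper: both argue that a non-trivial decomposing circle for $Q'$ can be assumed disjoint from the disk $D$ supporting the $\alpha^{+}$ move (the paper by a direct case analysis of $C\cap(D\cap Q')$, showing each possible configuration forces either a trivial factor or a disconnection of $Q'$; you by an innermost-arc minimality argument), and then observe that such a circle gives a non-trivial decomposition of $Q$, contradicting primeness. The only point worth noting is that your ``exceptional sub-case'' (both points of $C\cap Q'$ inside $D$) turns out to be vacuous under the paper's enumeration, so the transfer-to-the-before-picture step you sketch there --- whose parenthetical justification does not by itself show $|C\cap Q|=2$ after the local replacement --- is never actually needed.
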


\begin{proof}
For a contradiction, suppose that $Q'$ is not prime.
Let ${\mathcal C}$ be a set of circles that divides prime factors of $Q'$.
Let $D$ be a small (open) disk ($\subset S^{2}$) where the deformation of type $\alpha^{+}$ is applied.
We note that $D$ is a subsurface of $S^{2}$.
Hence, since $Q$ is applied the deformation of type $\alpha^{+}$, there are $d_{1}$ and $d_{2}$ in $D$ as in the left figure of Figure~\ref{alpha} first.
After $Q'$ was obtained from $Q$ by the deformation of type $\alpha^{+}$ within $D$, there are three double points of $Q'$ in $D$ as in the right figure of Figure~\ref{alpha}.
Then we claim that $D\cap {\mathcal C}=\emptyset$.
For a contradiction, suppose that there exists $C\in{\mathcal C}$ such that $C\cap D\neq\emptyset$.
Then $C\cap (D\cap Q')$ looks within $D$ as in Figure~\ref{Fact_prime_alpha} since $C\cap (D\cap Q')$ consists of at most two points.
However if $C\cap (D\cap Q')$ is as in Case~{\bf a}, then there is a trivial factor for the prime decomposition of $Q'$, a contradiction.
If $C\cap (D\cap Q')$ is as in Case~{\bf b}, then it is easy to see that $Q'$ is a union of more than one spherical curves, a contradiction.
Hence $D\cap {\mathcal C}=\emptyset$.
Let $d'_{1}, d'_{2}, d'_{3}$ be double points of $Q'$ relevant to the deformation of type $\alpha^{+}$. 
Since $D\cap {\mathcal C}=\emptyset$, we see that $d'_{1}, d'_{2}, d'_{3}$ are contained in a prime factor.
Hence, we may regard ${\mathcal C}$ as a system of decomposing circles for $Q$.
Obviously, ${\mathcal C}$ gives a non-trivial connected sum decomposition of $Q$, a contradiction.

\begin{figure}[htbp]
\centering
\includegraphics[width=6cm]{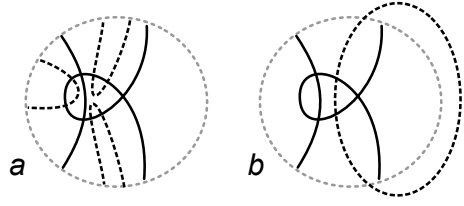}
\caption{The dotted lines and loop mean elements of ${\mathcal C}$.}
\label{Fact_prime_alpha}
\end{figure}

\end{proof}

\begin{lemma}\label{difference_fc_alpha}
Suppose that $d_{1}$ and $d_{2}$ are not nugatory.
Then, $f_c(Q')=f_c(Q)$.
\end{lemma}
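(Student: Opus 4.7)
My plan is to reduce Lemma~\ref{difference_fc_alpha} to the prime case already handled by Lemma~\ref{prime_alpha}, by producing a single system of mutually disjoint decomposing circles that simultaneously realizes prime decompositions of $Q$ and $Q'$.

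Let $\mathcal{C}$ be a system of mutually disjoint decomposing circles for $Q$ corresponding to its prime decomposition, so $|\mathcal{C}| = f_c(Q) - 1$. Let $D$ be the small open disk on which the deformation of type $\alpha^{+}$ is performed, so that $D \cap Q$ is the left-hand picture of Figure~\ref{alpha} and $D \cap Q'$ is the right-hand picture. I will first isotope $\mathcal{C}$ in $S^{2}$, keeping it transverse to $Q$ and disjoint from the double points of $Q$, so as to minimize the number of components of $\mathcal{C} \cap D$. I claim that after this minimization $\mathcal{C} \cap D = \emptyset$. Indeed, each $C \in \mathcal{C}$ meets $Q$ in exactly two points, so each component of $C \cap D$ is an embedded arc in $D$ with endpoints on $\partial D$ meeting $Q \cap D$ in at most two points. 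A case analysis parallel to that for Cases $\mathbf{a}$ and $\mathbf{b}$ in the proof of Lemma~\ref{prime_alpha} should show that every such arc either (i) can be pushed off $D$ by an ambient isotopy, contradicting minimality, or (ii) forces $C$ to produce a trivial prime factor of $Q$ or to disconnect $Q$, contradicting the hypothesis that $C$ is a decomposing circle.

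With $\mathcal{C} \cap D = \emptyset$ in hand, the disk $D$ lies in the closure of a single complementary region of $\mathcal{C}$ on $S^{2}$. Consequently, both $d_{1}$ and $d_{2}$ belong to a single prime factor $Q_{0}$ of $Q$, and the $\alpha^{+}$-move performed inside $D$ replaces $Q_{0}$ by some spherical curve $Q_{0}'$ while leaving every other prime factor unchanged. Thus $\mathcal{C}$ is simultaneously a system of decomposing circles for $Q'$, realizing a decomposition of $Q'$ into $Q_{0}'$ together with the remaining unchanged prime factors of $Q$. Since $Q_{0}$ is prime, Lemma~\ref{prime_alpha} implies that $Q_{0}'$ is also prime, so $\mathcal{C}$ realizes a prime decomposition of $Q'$. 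Hence $f_c(Q') = |\mathcal{C}| + 1 = f_c(Q)$.

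The main obstacle will be showing that all the residual arc configurations inside $D$ can be removed in the isotopy step, and this is precisely where the not-nugatory hypothesis on $d_{1}$ and $d_{2}$ enters. If some $d_{i}$ were nugatory, a decomposing arc could thread between the two strands at $d_{i}$ in a way that cannot be pushed off $D$ without crossing $Q$, and after the $\alpha^{+}$-move this could merge or split prime factors. Ruling out each such residual configuration under the not-nugatory hypothesis, by inspecting the local picture of the left side of Figure~\ref{alpha} near each $d_{i}$ and appealing to the fact that no decomposing circle can isolate a single non-nugatory double point, is the technical heart of the argument.
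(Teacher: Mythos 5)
Your overall strategy is the same as the paper's: show that a system of decomposing circles $\mathcal{C}$ for $Q$ can be taken disjoint from the disk $D$, conclude that $d_{1},d_{2}$ lie in a single prime factor $Q_{0}$, and then invoke Lemma~\ref{prime_alpha} to see that the modified factor $Q_{0}'$ is still prime, so $f_c(Q')=f_c(Q)$. The closing step of your argument is fine and matches the paper. The problem is that the pivotal claim $\mathcal{C}\cap D=\emptyset$ is only asserted, not proved: you write that a case analysis ``should show'' the dichotomy (i) the arc can be pushed off $D$, or (ii) it forces a trivial factor or disconnects $Q$, and you then concede at the end that ruling out the residual configurations ``is the technical heart of the argument.'' That heart is exactly what is missing, and moreover the dichotomy as you state it is not exhaustive. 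There is a third configuration: an arc of $C\cap D$ that crosses \emph{both} edges $a_{1}$ and $a_{2}$ of the $2$-gon bounded by $d_{1}$ and $d_{2}$ (one intersection point on each edge, which is consistent with $C$ meeting $Q$ in exactly two points). Such an arc cannot be isotoped off $D$ without crossing $Q$, does not cut off a trivial factor, and does not disconnect $Q$, so neither (i) nor (ii) applies to it.

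The paper's proof handles precisely this: first, $C$ must meet both $a_{1}$ and $a_{2}$, since an arc meeting only one edge would cut off a trivial prime factor (Figure~\ref{trivialFactorCotra}); then, since $C\cap Q$ consists of exactly these two points, $C$ can be isotoped across the triangle of Figure~\ref{2gonTransverse} onto one of the double points, producing a circle $C^{\ast}$ meeting $Q$ transversely only in $d_{i}$. This exhibits $d_{1}$ and $d_{2}$ as nugatory, contradicting the hypothesis. You correctly sense in your last paragraph that the non-nugatory hypothesis must enter at this point, but you never carry out the argument; without it the claim $\mathcal{C}\cap D=\emptyset$, and hence the whole lemma, is unproved. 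To repair your proposal, replace the appeal to Cases \textbf{a} and \textbf{b} of Lemma~\ref{prime_alpha} (which concern the three-crossing picture in $Q'$, not the $2$-gon in $Q$) by the two-step argument above.
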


\begin{proof}
Let ${\mathcal C}$ be a set of circles that divides prime factors of $Q$.
Let $D$ be a small disk where the deformation of type $\alpha^{+}$ is applied.
First we show that $D\cap {\mathcal C}=\emptyset$.
For a contradiction, suppose that $D\cap {\mathcal C}\neq\emptyset$.
Let $C$ be a component of  $\mathcal C$ such that $D\cap C\neq\emptyset$.
Since $Q\cap C$ consists of two points, $C\cap (D\cap Q)$ consists of at most two points.
%
%
Let $a_{1},a_{2}$ be the edges of the $2$-gon (hence $\partial a_{i}=d_{1}\cup d_{2}$).
Here, we note that $C\cap a_{1}\neq\emptyset$, and $C\cap a_{2}\neq\emptyset$, because if $C\cap a_{i}=\emptyset$, then we see that a prime factor of $P$ decomposed by ${\mathcal C}$ is trivial (see Figure~\ref{trivialFactorCotra}).
Hence $C\cap D$ will look as in Figure~\ref{2gonTransverse}.
By using an isotopy along the triangle in Figure~\ref{2gonTransverse}, we can obtain a circle $C^{\ast}$ from $C$ such that $C^{\ast}\cap P$ consists of  a single transverse point $d_{i}$ ($i=1,2$), hence $d_{1}$ and $d_{2}$ are nugatory, a contradiction.

Finally, we show that $f_c(Q')=f_c(Q)$.
Let $Q_{0}$ be a prime factor including $d_{1}$ and $d_{2}$.
Then $Q$ admits the connected sum decomposition $Q=Q_{0}\sharp Q_{1}\sharp \cdots\sharp Q_{s}$ (possibly $Q_{1}\sharp \cdots\sharp Q_{s}$ is empty) where $Q_{1},\dots, Q_{s}$ are prime factors.
Since $D\cap{\mathcal C}=\emptyset$, $Q'$ admits a connected sum decomposition $Q'=Q'_{0}\sharp Q_{1}\sharp \cdots\sharp Q_{s}$, where $Q'_{0}$ is obtained from $Q_{0}$ by a deformation of type $\alpha^{+}$ performed within $D$.
These facts together with Lemma~\ref{prime_alpha}, shows that $Q'_{0}$ is prime.
Therefore $f_c(Q')=f_c(Q)$ ($=s+1$).
\end{proof}
\begin{figure}[htbp]
\centering
\includegraphics[width=2.8cm]{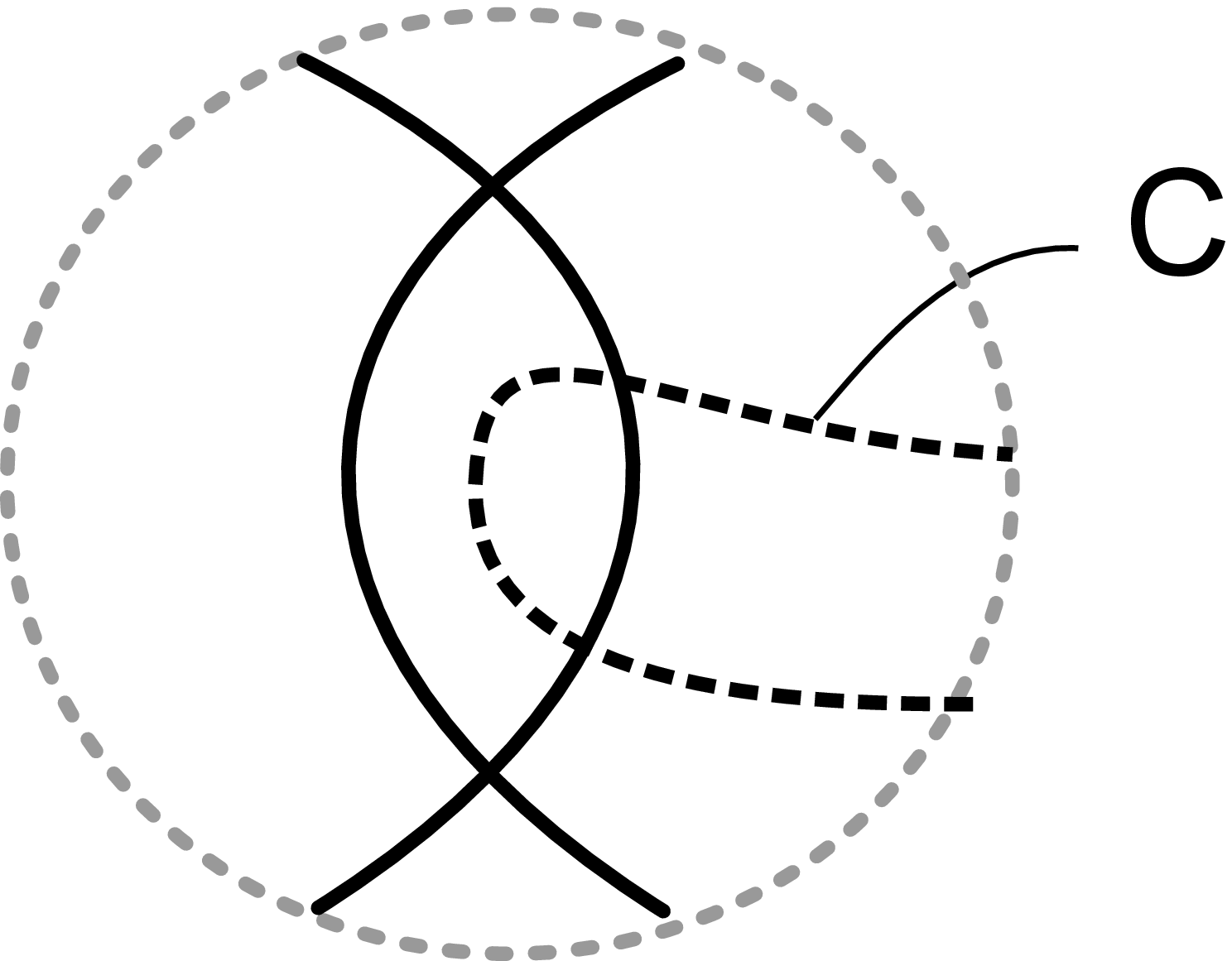}\caption{}
\label{trivialFactorCotra}
\end{figure}

\begin{figure}[htbp]
\centering
\includegraphics[width=7cm]{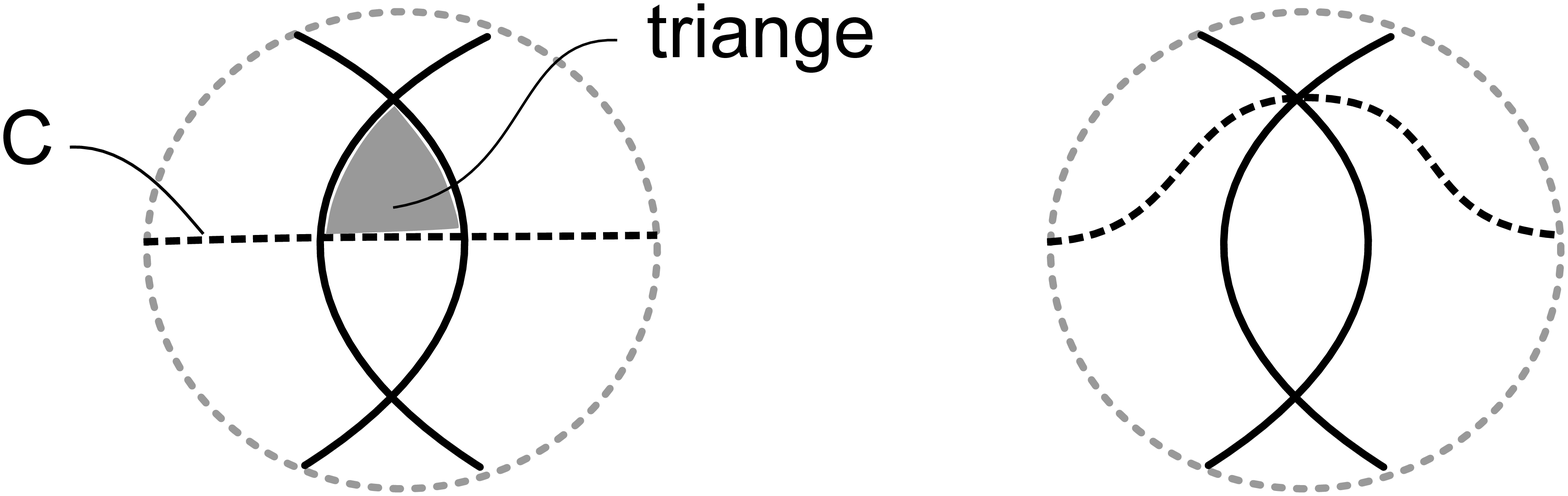}\caption{}
\label{2gonTransverse}
\end{figure}

Let $P$ and $P'$ be spherical curves.
Suppose that $P'$ is obtained from $P$ a single deformation of type $\riii$.
Let $c_1$, $c_2$ and $c_3$ be the three double points of $P$ relevant to the deformation of type $\riii$.
Then we have Lemma~\ref{deference_fc} below.

\begin{lemma}\label{deference_fc}
If some $c_{i}$ is nugatory, then $f_c(P')<f_c(P)$ (in particular, $f_c(P')=f_c(P)+1$ or $f_c(P')=f_c(P)+2$).
\end{lemma}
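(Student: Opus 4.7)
My approach is to use the nugatory-witnessing circle for $c_i$ to construct a local decomposing circle that is produced (or destroyed) by the $\riii$ deformation, and to control via a careful case analysis how many such circles appear.

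First I fix the local model. Let $D$ be the small disk in $S^{2}$ in which the $\riii$ move is carried out; $D$ contains the triangle with vertices $c_{1},c_{2},c_{3}$ and the move replaces $P\cap D$ by $P'\cap D$ while keeping $P\setminus D=P'\setminus D$. For the nugatory $c_i$, choose a simple closed curve $C\subset S^{2}$ meeting $P$ transversely only at $c_i$. An ambient isotopy fixing $P$ puts $C$ in general position with respect to $\partial D$ and arranges $C\cap P\cap D=\{c_i\}$, restricting the admissible local configurations of $C$ near the triangle to a short list. Pushing $C$ off $c_i$ along the bisector of the two local strands yields a circle $C^{*}$ meeting $P$ transversely at exactly two points — one on each strand — hence $C^{*}$ is a decomposing circle of $P$ witnessing a connected-sum structure at $c_i$.

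Next I compare the prime decompositions of $P$ and $P'$. As in the proof of Lemma~\ref{difference_fc_alpha}, any system of prime-decomposing circles for $P$ that avoids $D$ serves equally as a decomposing system for $P'$, and conversely. Consequently the difference $f_c(P')-f_c(P)$ is completely controlled by decomposing circles entering $D$. The push-off $C^{*}$ (and, if another $c_j$ is also nugatory, a second such push-off) is precisely such a circle; I will track what happens to it when the central strand of the triangle is swept across the opposite vertex during the $\riii$ move. The essential geometric claim is that this sweeping changes the transverse intersection pattern of $C^{*}$ with the curve, producing the asserted change in $f_c$ of magnitude $1$ or $2$ according to how many of $c_{1},c_{2},c_{3}$ are simultaneously nugatory.

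Finally I carry out the case analysis: if exactly one of $c_{1},c_{2},c_{3}$ is nugatory, a single push-off gives the jump of $1$; if two or three are nugatory, their witnessing circles can be chosen mutually disjoint (since distinct triangle vertices admit disjoint small neighborhoods inside $D$), producing two \emph{independent} new decomposing circles. The main obstacle I expect is establishing this independence: I must ensure that two nugatory witnesses give rise to decomposing circles that register distinct prime factors of $P'$ rather than the same factor counted twice, and also that the local triangular combinatorics precludes a third independent one. I would address this by showing that the two push-off circles are non-parallel in $S^{2}\setminus P'$ via a local-picture argument, using that their defining intersection points lie on disjoint strand-pairs of the $\riii$ triangle, and by a direct check that the remaining third vertex cannot contribute an independent decomposing circle beyond those already produced.
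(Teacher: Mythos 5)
Your overall idea --- that a nugatory $c_i$ forces a decomposing circle through the disk where the $\riii$ move is performed, so that the comparison of $f_c(P)$ and $f_c(P')$ reduces to what happens inside that disk --- is the same germ as the paper's argument, but as written there are two genuine gaps. The first is the direction of the count. What the proof must establish (and what the paper's proof does establish; the displayed statement of the lemma has a sign slip) is $f_c(P)=f_c(P')+1$ or $f_c(P)=f_c(P')+2$: the nugatory crossing contributes an extra $\infty$-shaped prime factor to $P$, and the $\riii$ move \emph{destroys} it. Your mechanism points the other way: you speak of push-offs ``producing two independent new decomposing circles'' that ``register distinct prime factors of $P'$,'' which, if carried out, would show that $P'$ gains factors. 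You never pin down which of the two curves has more factors, and the ``essential geometric claim'' that sweeping the strand across the vertex changes the intersection pattern by the asserted amount is stated, not proved.

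The second, more serious, gap is the quantitative step. Exhibiting the decomposing circle at the nugatory vertex is not enough; you must also show that the remainder of the curve contributes the \emph{same} number of prime factors before and after the move. When exactly one $c_i$ is nugatory, the decompositions are $P=P_1\sharp\infty\sharp P_2$ and $P'=P_1\sharp P_2'$, where $P_2'$ is obtained from $P_2$ by a deformation of type $\alpha^{+}$ at the two non-nugatory crossings; the needed equality $f_c(P_2')=f_c(P_2)$ is precisely Lemma~\ref{difference_fc_alpha} and uses the hypothesis that $c_2$ and $c_3$ are not nugatory. Your proposal invokes Lemma~\ref{difference_fc_alpha} only to transfer circles disjoint from $D$, not for this step, so the ``jump of $1$'' is not actually derived. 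In the remaining case you should also record that two nugatory vertices force the third to be nugatory, after which the drop of $2$ falls out of the explicit local factor ${\widetilde \infty^{3}}$ (three prime factors) being replaced by the single prime curve $3_1$; the independence-of-push-offs argument you sketch, aimed at circles in $S^2\setminus P'$, is working on the wrong curve and is not needed once the local factorizations are written down.
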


\begin{proof}
Note that we have essentially the following two cases.

\medskip
\noindent
{\bf Case~1}: 
Exactly one of $c_{1},c_{2},c_{3}$, say $c_{1}$ is nugatory.\\
Since $c_{1}$ is nugatory, $P$ admits a connected sum decomposition $P_{1}\sharp \infty\sharp P_{2}$ where $P_{1}$ and $P_{2}$ are obtained from $P$ by smoothing at $c_{1}$ and $\infty$ means $\infty^{1}$ in Definition~\ref{alpha_beta}.
Hence $f_c(P)=f_c(P_{1})+1+f_c(P_{2})$.

On the other hand, by Figure~\ref{c1isNugatory}, it is directly observed that $P'$ admits a connected sum decomposition $P_{1}\sharp P'_{2}$ where $P'_{2}$ is obtained from $P_{2}$ by a deformation of type $\alpha^{+}$.
Hence $f_c(P')=f_c(P_{1})+f_c(P'_{2})$.
%

Since $c_{2}$ and $c_{3}$ are not nugatory, by Lemma~\ref{difference_fc_alpha}, we see that $f_c(P'_{2})=f_c(P_{2})$.
Hence $f_c(P)=f_c(P_{1})+1+f_c(P_{2})=f_c(P_{1})+1+f_c(P'_{2})=f_c(P')+1$.
Then we see $f_c(P')<f_c(P)$.

\begin{figure}[htbp]
\centering
\includegraphics[width=8cm]{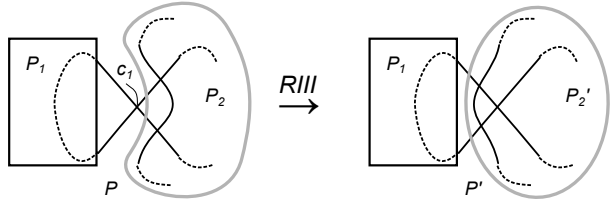}\caption{$P$ admits a connected sum decomposition $P_{1}\sharp \infty\sharp P_{2}$, whereas $P'$ admits a connected sum decomposition $P_{1}\sharp P'_{2}$.}
\label{c1isNugatory}
\end{figure}

\medskip
\noindent
{\bf Case~2}: 
At least two of $c_{1},c_{2},c_{3}$, say $c_{1},c_{2}$, are nugatory.\\
Since $c_{1},c_{2}$ are nugatory, it is easy to see that $c_{3}$ is nugatory also.
(Recall the relationship between spherical curves with dots and connected sums as in Definition~\ref{connected}.)
Let ${\widetilde \infty^{3}}$ be a spherical curve with dots as Figure~\ref{tildeInfty3}.
Then $P$ admits a connected sum decomposition $P_{1}\sharp P_{2}\sharp P_{3}\sharp {\widetilde \infty^{3}}$.
Then $f_c(P)=f_c(P_{1})+f_c(P_{2})+f_c(P_{3})+3$ since $f_c({\widetilde \infty^{3}})=3$.

On the other hand, since $3_{1}$ is obtained from ${\widetilde \infty^{3}}$ by a deformation of type $\riii$, $P'$ admits a connected sum decomposition $P_{1}\sharp P_{2}\sharp P_{3}\sharp 3_{1}$, where $3_{1}$ is as in Definition~\ref{alpha_beta}.
Then $f_c(P')=f_c(P_{1})+f_c(P_{2})+f_c(P_{3})+f_c(3_{1})=f_c(P_{1})+f_c(P_{2})+f_c(P_{3})+1$.
Hence $f_c(P)=f_c(P')+2$.
Then we see $f_c(P')<f_c(P)$.

\begin{figure}[htbp]
\centering
\includegraphics[width=3.5cm]{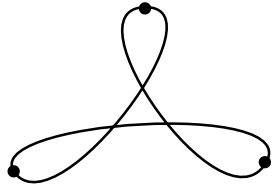}\caption{${\widetilde \infty^{3}}$}
\label{tildeInfty3}
\end{figure}

\end{proof}

\begin{notation}\label{notationOpSeq}
Let $P$ and $P'$ be two spherical curves that are related by a finite sequence of deformations of type $\ri$ and type $\riii$, i.e., there exists a finite sequence of spherical curves $P = P_0, P_1,  \dots, P_r = P'$, where $P_{i}$ is obtained from $P_{i-1}$ by a single deformation of type $\ri$ or type $\riii$.
Then,  $Op_i$ denotes the deformation from $P_{i-1}$ to $P_{i}$, and these settings are expressed by   
\[P = P_0 \stackrel{Op_1}{\to} P_1 \stackrel{Op_2}{\to} \cdots \stackrel{Op_r}{\to} P_r = P'.\]  
\end{notation}

In the reminder of this section, for the proof of Theorem~\ref{prop1}, we prepare Claim~\ref{cycle}. 
Let $Q$ and $Q'$ be spherical curves.
Suppose that $Q'$ is obtained from $Q$ a single deformation of type $\riii$ and $f_c (Q') \ge f_c (Q)$.
Let $D$ be a small disk where the deformation of type $\riii$ is applied.
Let $c_1$, $c_2$ and $c_3$ be the double points in $Q\cap D$.
Since $f_c (Q') \ge f_c (Q)$, every $c_{\lambda}$ is not a nugatory double point ($\lambda=1,2,3$) by Lemma~\ref{deference_fc}.

On the other hand, by Fact~\ref{factITw}, there exists a finite sequence of deformations of type $\ri^{-}$ from $Q$ to $\reduced(Q)$.
Since $c_1$, $c_2$ and $c_3$ are not nugatory, this sequence does not affect to $Q\cap D$.
Then, let $R$ be the spherical curve obtained from $\reduced(Q)$ by the single deformation of type $\riii$ within $D$.

Moreover, since the sequence consisting of deformations of type $\ri^{-}$ does not affect to $Q\cap D$, applying the sequence of deformations of type $\ri^{-}$ to $Q'$ makes sense.
Let $\widetilde{Q'}$ be the spherical curve obtained from $Q'$ by applying the sequence.
Then we have following claim.

\begin{claim}\label{cycle}
$R$ and $\widetilde{Q'}$ are ambient isotopic.
\end{claim}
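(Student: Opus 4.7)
The plan is to exploit a commutativity principle: the type-$\ri^{-}$ sequence from $Q$ to $\reduced(Q)$ is supported in $S^{2}\setminus D$, whereas the single type-$\riii$ deformation producing $Q'$ from $Q$ is supported in the small disk $D$. Because these two operations have disjoint supports, the order in which one performs them should not matter, and so the two curves $R$ and $\widetilde{Q'}$ obtained from $Q$ by the two possible orderings should be literally the same subset of $S^{2}$ (in particular ambient isotopic).

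To make this precise, I would first record the following disjointness statement. Since none of $c_{1},c_{2},c_{3}$ is nugatory, none of them is the vertex of a $1$-gon of $Q$, because the vertex of a $1$-gon is automatically nugatory. Hence every $1$-gon that is removed by an $\ri^{-}$ move in the given sequence has its vertex outside $\{c_{1},c_{2},c_{3}\}$, and (shrinking $D$ if necessary) each such move can be realized inside a disk disjoint from $D$. This is exactly the content of the sentence preceding the claim, so no new work is needed here, but it is the key fact that legitimizes ``applying the same sequence'' to $Q'$: since $Q$ and $Q'$ agree on $S^{2}\setminus D$, each individual $\ri^{-}$ move is performed on an identical local picture in both curves.

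With this in hand, I would simply track how the two curves are built. The curve $R$ is obtained by first performing the $\ri^{-}$ sequence on $Q$ (changing only $Q\cap(S^{2}\setminus D)$ and producing $\reduced(Q)$, which still agrees with $Q$ on $D$) and then performing the $\riii$ move inside $D$ (changing only the part inside $D$). The curve $\widetilde{Q'}$ is obtained by first performing the $\riii$ move inside $D$ to get $Q'$ (which agrees with $Q$ outside $D$) and then performing the same $\ri^{-}$ sequence outside $D$. Consequently, both $R$ and $\widetilde{Q'}$ agree with $\reduced(Q)$ on $S^{2}\setminus D$ and agree with $Q'$ on $D$, so they coincide as subsets of $S^{2}$, which gives the required ambient isotopy.

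The only place where I expect a mild technical obstacle is in verifying that the individual $\ri^{-}$ moves can be genuinely transported between $Q$ and $Q'$ in the correct order. This amounts to checking that, after performing an initial segment of the sequence on both curves, the two resulting curves still agree on $S^{2}\setminus D$ (so the next move is still available and acts identically on both). This is handled by an induction on the length of the sequence, using the disjointness observation above at each step together with the hypothesis that no move in the sequence ever touches $D$. Aside from this bookkeeping, the argument is purely a support-disjointness commutativity.
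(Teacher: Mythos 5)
Your proposal is correct and follows essentially the same route as the paper: both arguments observe that the $\ri^{-}$ sequence is supported in $\overline{D}=cl(S^{2}\setminus D)$ while the $\riii$ move is supported in $D$, and conclude that $R$ and $\widetilde{Q'}$ agree separately on $D$ and on $\overline{D}$, hence coincide. The extra bookkeeping you flag (transporting the $\ri^{-}$ moves one at a time to $Q'$) is exactly what the paper disposes of in the sentences preceding the claim, using the non-nugatory hypothesis on $c_{1},c_{2},c_{3}$.
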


\begin{proof} 
Since $R$ is obtained from $\reduced(Q)$ by applying a deformation of type $\riii$, we see that $R\cap \overline{D}=\reduced(Q)\cap \overline{D}$, where $\overline{D}=cl(S^2 \setminus D)$.
Since $\reduced(Q)\cap \overline{D}=\widetilde{Q'}\cap\overline{D}$, we have that $R\cap \overline{D}=\widetilde{Q'}\cap \overline{D}$.

Moreover, $Q'$ is obtained from $Q$ by applying the deformation of type $\riii$ to $Q\cap D$ and $R$ is obtained from $\reduced(Q)$ by applying the deformation of type $\riii$ to $\reduced(Q)\cap D$.
Since the sequence of deformations of type $\ri^{-}$ dose not affect $D$, these facts imply $R\cap D=\widetilde{Q'}\cap D$.

Thus, $R$ and $\widetilde{Q'}$ are ambient isotopic.

\end{proof}

%
%
%


\section{Proof of Theorem~\ref{thm1}}\label{sec3}

In this section, we give a proof of Theorem~\ref{prop1}. 


\noindent{\bf{Proof of if part of the statement of Theorem~\ref{thm1}.}}

By Remark of Definition~\ref{alpha_beta}, a deformation of type $\alpha$ ($\beta^{\pm}(m)$, resp.) consists of a single deformation of type $\riii$ and a single deformation of type $\ri$ ($3+m$ deformations of type $\ri$, resp.).
This together with the definition of $\reduced(\cdot)$ implies if part of the statement of Theorem~\ref{prop1}.

\noindent{\bf{Proof of only if part of the statement of Theorem~\ref{thm1}.}}


By the assumption, there exists a finite sequence denoted by;
\[P = P_0 \stackrel{Op_1}{\to} P_1 \stackrel{Op_2}{\to} \cdots \stackrel{Op_r}{\to} P_r = P'\]
where $\{ Op_i \}_{i=1}^{r}$ consists deformations of type $\ri$ and a single deformation of type $\riii$ (Notation~\ref{notationOpSeq}).
Let $j$ be the integer such that $Op_j$ is the deformation of type $\riii$.

By exchanging $P$ and $P'$, if necessary, we may suppose that $f_{c}(P_{j})\ge f_{c}(P_{j-1})$.
Hence it is clear that the next claim gives the proof of the only if part of the statement of Theorem~\ref{thm1}.
Hence in the remainder of this section, we give a proof of Claim~\ref{claimA}.

\begin{claim}\label{claimA}
If $f_{c}(P_{j})\ge f_{c}(P_{j-1})$, then $\reduced(P)$ is obtained from $\reduced(P')$ by a single deformation of type $\riii$, type $\alpha^-$ or type $\beta^-$.  
\end{claim}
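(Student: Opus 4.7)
The strategy is to apply Claim~\ref{cycle} in order to transfer the unique $\riii$ deformation $Op_j$ to one applied directly to $\reduced(P)$, and then to classify the possible ways in which the resulting curve can fail to be $\reduced(P')$.

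First, by Fact~\ref{factITw}, $\reduced(P)$ and $\reduced(P_{j-1})$ are ambient isotopic (since $P$ and $P_{j-1}$ are related by the $\ri$ deformations $Op_1,\ldots,Op_{j-1}$); likewise $\reduced(P')$ and $\reduced(P_j)$ are ambient isotopic. Second, the hypothesis $f_c(P_j)\ge f_c(P_{j-1})$ together with the contrapositive of Lemma~\ref{deference_fc} shows that none of the three double points $c_1,c_2,c_3$ involved in $Op_j$ is nugatory. Consequently the hypotheses of Claim~\ref{cycle} are satisfied with $(Q,Q')=(P_{j-1},P_j)$. Letting $D$ denote the disk in which $Op_j$ is performed and letting $R$ denote the spherical curve obtained from $\reduced(P_{j-1})$ by applying the same $\riii$ deformation in $D$, Claim~\ref{cycle} yields that $R$ is ambient isotopic to $\widetilde{P_j}$, where $\widetilde{P_j}$ is the image of $P_j$ under the $\ri^-$ reduction sequence used to reduce $P_{j-1}$.

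If $\widetilde{P_j}$ is $\ri$-minimal, then $\widetilde{P_j}$ is already ambient isotopic to $\reduced(P_j)\approx\reduced(P')$, and the single $\riii$ deformation from $\reduced(P)$ to $R$ is precisely the deformation relating $\reduced(P)$ and $\reduced(P')$ up to ambient isotopy. Otherwise, $\widetilde{P_j}$ contains further $1$-gons; transferring the subsequent $\ri^-$ reductions through the ambient isotopy $R\to\widetilde{P_j}$ exhibits a sequence from $\reduced(P)$ to $\reduced(P')$ consisting of a single $\riii$ followed by some $k\ge 1$ deformations of type $\ri^-$. Because $\reduced(P)$ itself is $\ri$-minimal, each of these $k$ reduced $1$-gons must be incident to one of $c_1,c_2,c_3$.

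The final and main step is a case analysis showing that $k$ must equal $0$, $1$, or $m+3$ for some $m\ge 0$, and that the corresponding composite deformation matches, by Definition~\ref{alpha_beta} and its Remark, a deformation of type $\riii$, $\alpha^-$, or $\beta^-(m)$, respectively; this will give the claim. This analysis proceeds by examining how the three arcs entering $D$ close up globally in $\reduced(P)$ and identifying the specific local configurations that can arise after $Op_j$. One shows that the local picture for $k=1$ is precisely that of Figure~\ref{alpha}, while for $k=m+3$ the $1$-gons arrange themselves as the connected-sum factor ${\infty}^m\sharp 3_1$ appearing in the definition of $\beta$. The main obstacle is this case analysis; in particular ruling out $k=2$ requires a careful combinatorial inspection of the regions adjacent to the central triangle in $\reduced(P)$ and the way the six boundary points of $D$ are identified by the outside structure of $\reduced(P)$.
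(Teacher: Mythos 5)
Your setup is sound and coincides with the paper's: you use Fact~\ref{factITw} to identify $\reduced(P)$ with $\reduced(P_{j-1})$ and $\reduced(P')$ with $\reduced(P_j)$, the contrapositive of Lemma~\ref{deference_fc} to see that $c_1,c_2,c_3$ are not nugatory, and Claim~\ref{cycle} to transfer $Op_j$ to a deformation of type $\riii$ performed on $\reduced(P_{j-1})$ inside $D$. Up to that point you have only rearranged the bookkeeping, and you have done it correctly.

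The genuine gap is that the step you yourself call ``the final and main step'' is not carried out: you assert, but do not prove, that the number $k$ of residual $\ri^-$ reductions is $0$, $1$, or $m+3$, that $k=1$ forces the configuration of Figure~\ref{alpha}, and that $k\ge 2$ forces an ${\infty}^m\sharp 3_1$ connected summand. This classification is where essentially all of the geometric content of Claim~\ref{claimA} lives, and it cannot be established by looking at $k$ alone; one has to analyze the global closure of the three strands outside $D$. The paper does exactly that: it stratifies by $\delta$, the number of components of $P_{j-1}\setminus D$ (equivalently, which of the $f(I_\lambda)$ meet each other), and within each stratum by which of the arcs $g(I_\lambda)$ of $\reduced(P_{j-1})$ are simple arcs. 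Each resulting configuration is then matched by hand to a figure: $\delta=1$, or no simple outside arc, gives type $\riii$ (via an $\ri$-minimality argument like Claim~\ref{claim4}, which uses the connectivity hypothesis to exclude a $1$-gon crossing $\partial D$); exactly one simple outside arc gives the picture of Figure~\ref{alpha} and hence type $\alpha^-$ (Claims~\ref{S_alphaObtaindedFromS_3} and \ref{claim4-2-1}); two simple outside arcs give the decomposition $\reduced(P_{j-1})^{\bigstar\bigstar}\sharp\infty^m\sharp 3_1$ and hence type $\beta^-(m)$. In particular $k=2$ is excluded not by a separate combinatorial inspection but because two simple outside arcs automatically drag the third edge of the triangle into the unravelling, yielding $k=m+3\ge 3$. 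Your proposal would be complete only after supplying this case analysis, so as it stands the claim is not proved.
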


Let $D$ be a small disk to which $Op_j$ is applied.
Let $f$ be the generic immersion satisfying that $f(S^1)=P_{j-1}$.
%
%
By definition, since $D$ is a disk containing a $3$-gon corresponding to a deformation of type $\riii$, $f^{-1}(D)$ consists of three components $J_{1}$, $J_{2}$ and $J_{3}$.
Then the number of components of $S^1 \setminus f^{-1}(D)$ is also three.
The three components of $S^1 \setminus f^{-1}(D)$ are denoted by $I_1$, $I_2$, and $I_3$.
%
%
%
%
Let $c_1$ ($c_2$, $c_3$, resp.) be the double point that is the intersection of $f(J_2)$ and $f(J_3)$ ($f(J_3)$ and $f(J_1)$, $f(J_1)$ and $f(J_2)$, resp.).
Let $\delta$ be the number of the components of $P_{j-1}\setminus D=P_{j-1} \cap \overline{D}$, where $\overline{D}=cl(S^2 \setminus D)$.

Let $g$ be a generic immersion satisfying that $g(S^1)=\reduced(P_{j-1})$ such that $S^1 \setminus g^{-1}(D)$ consists of the tree components $I_{1},I_{2}$ and $I_{3}$.

\medskip
\noindent
{\bf Case~1}: $\delta=1$ (i.e., $f(I_{\lambda}) \cap f(I_{\mu})$ $\neq$ $\emptyset$ and $f(I_{\mu}) \cap f(I_{\nu})$ $\neq$ $\emptyset$, where $\lambda$, $\mu$ and $\nu$ are mutually distinct numbers in $\{1, 2, 3 \}$).

Since $\delta=1$, we see that each $c_{i}$ is not nugatory.
This together with Fact~\ref{factITw} shows that the sequence of the deformations of type $\ri^{-}$ from $P_{j-1}$ to $\reduced(P_{j-1})$ dose not affect $D$.
Hence we may apply $Op_{j}$ to $\reduced(P_{j-1})$, and $S_{\iii}$ denotes the obtained spherical curve.

%
%
%

\begin{claim}\label{claim4}
$S_{\iii}$ is $\ri$-minimal.
\end{claim}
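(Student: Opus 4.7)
The plan is to argue by contradiction: suppose $S_{\iii}$ contains a 1-gon $L$ with vertex $d$, and let $A \subset S^1$ be the sub-arc parameterizing the boundary of $L$ under the generic immersion producing $S_{\iii}$. Since the $\riii$-move affects only the interior of $D$, $d$ is either a double point of $\reduced(P_{j-1})$ lying in $\overline{D}$, or one of the three new crossings inside $D$ produced by the $\riii$-move; denote the latter by $c_1', c_2', c_3'$. The proof splits into these two cases for $d$.

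First suppose $d \in \overline{D}$. Then the endpoints of $A$ lie in $I_1 \cup I_2 \cup I_3$. I would first show that $A$ is disjoint from $J_1 \cup J_2 \cup J_3$: if $A$ met some $J_i$, then since the endpoints of $A$ lie outside the $J$-arcs, $A$ would have to contain $J_i$ in full, so the two interior double points of the corresponding strand in $D$ would become interior double points of $L$, contradicting the 1-gon property. Hence $A$ is contained in a single $I_k$, and $L$ lies in the part of $S_{\iii}$ that coincides with $\reduced(P_{j-1})$; this contradicts the $\ri$-minimality of $\reduced(P_{j-1})$.

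Next suppose $d = c_l'$ for some $l$; WLOG $d = c_1'$, which lies on two of the three post-move strands, say those parameterized by $J_i$ and $J_j$. Using that the $I$- and $J$-arcs alternate around $S^1$ and that each strand in $D$ carries exactly two interior double points, a cyclic-order analysis shows that $A$ must have the form $(\text{partial } J_i) \cup I_k \cup (\text{partial } J_j)$ for some $k$; any alternative routing would force $A$ to contain a full $J_m$ ($m \neq i, j$) and hence interior double points. The 1-gon property then forces $I_k$ to carry no interior double points. But in Case~1 the hypothesis $\delta = 1$ says that the three arcs $f(I_\lambda)$ are mutually connected in $P_{j-1} \cap \overline{D}$; since distinct $I$-arcs have disjoint endpoints on $\partial D$, each such connection must come from a shared interior double point, and so every $I$-arc contains at least one interior double point — a contradiction.

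The main delicate point will be the cyclic-order bookkeeping in the second case, ruling out more complicated routings of $A$ through $D$. However, this reduces immediately to the observation that each strand of the new $\riii$-triangle passes through exactly two interior double points, so a 1-gon boundary cannot afford to traverse any $J_m$ in full, leaving only the one-$I$-arc route that the Case~1 hypothesis forbids.
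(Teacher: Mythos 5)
Your proof is correct and follows essentially the same route as the paper's: you exclude a $1$-gon whose boundary lies in $\overline{D}$ by appealing to the $\ri$-minimality of $\reduced(P_{j-1})$, and you exclude a $1$-gon meeting $D$ by showing its edge would have to traverse a double-point-free arc $g(I_k)$, contradicting the connectivity hypothesis $\delta=1$ of Case~1. The only difference is that your cyclic-order bookkeeping spells out explicitly what the paper delegates to Figure~\ref{both1gon}.
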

\begin{proof}
By the definition of a deformation of type $\riii$, there is no $1$-gon completely included in $S_{\iii}\cap D$.
Since the sequence from $P_{j-1}$ to $\reduced(P_{j-1})$ resolves $1$-gons in $P_{j-1}\cap\overline{D}$, there is no $1$-gon completely included in $S_{\iii}\cap \overline{D}$. 
If there exists a $1$-gon on the 2-sphere, then the $1$-gon intersects $S_{\iii}\cap D$.
Here, for example, the 1-gon will look as in Figure~\ref{both1gon}.
However, this contradicts the condition of Case~1.

\begin{figure}[htbp]
\centering
\includegraphics[width=2.8cm]{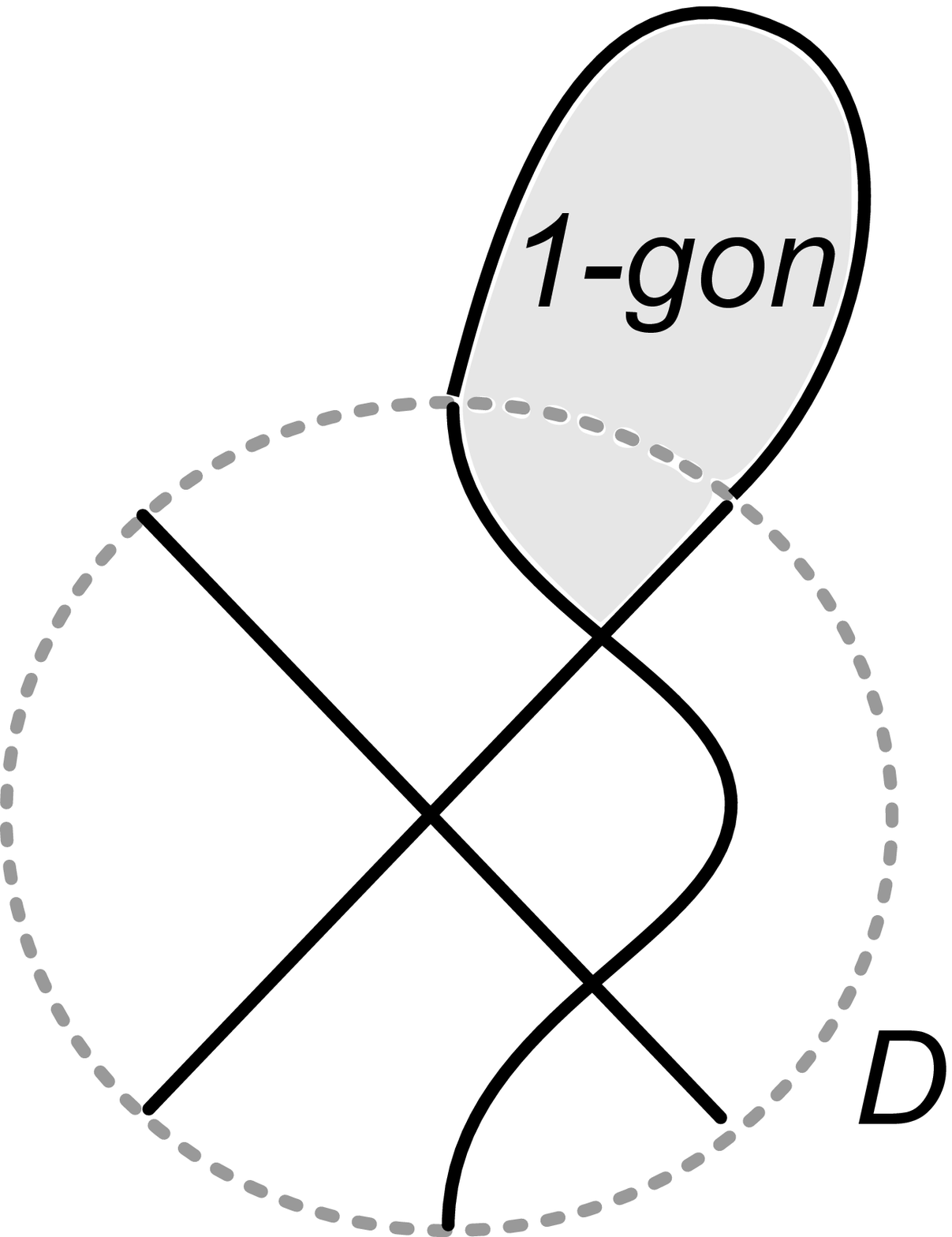}\caption{}
\label{both1gon}
\end{figure}

\end{proof}

Since the sequence of deformations of type $\ri$ from $P_{j-1}$ to $\reduced(P_{j-1})$ dose not affect $D$, we may apply the sequence to $P_{j}$.
Then we denote the obtained spherical curve ${\widetilde P_{j}}$.
Hence, we see that ${\widetilde P_{j}}\cap\overline{D}$ is ambient isotopic to $\reduced(P_{j-1})\cap\overline{D}$ and ${\widetilde P_{j}}\cap D$ is ambient isotopic to $P_{j}\cap D$.
Then we can apply similar arguments of the proof of Claim~\ref{cycle} with regarding $Q=P_{j-1}$, $Q'=P_{j}$, $R=S_{\iii}$ and $\widetilde{P_{j}}=\widetilde{Q'}$, 
it is clear that $S_{\iii}$ is ambient isotopic to ${\widetilde P_{j}}$.
This fact together with Claim~\ref{claim4}, we see that ${\widetilde P_{j}}$ is ambient isotopic to $\reduced(P_{j})$.
On the other hand, the sequence $\{ Op_i \}_{i=j+1}^{r}$ from $P_{j}$ to $P_{r}$ ($=P'$) consist of deformations of type $\ri$, by Fact~\ref{factITw},
it is clear that $\reduced(P')=\reduced(P_{j})$ up to ambient isotopy.
Hence $\reduced(P')={\widetilde P_{j}}$ up to ambient isotopy.

On the other hand, since the sequence $\{ Op_i \}_{i=1}^{j-1}$ from $P$ ($=P_{0}$) to $P_{j-1}$ consists of deformations of type $\ri$, by Fact~\ref{factITw}, it is clear that $\reduced(P)=\reduced(P_{j-1})$ up to ambient isotopy.

%
%
Recall that $S_{\iii}$ is obtained from $\reduced(P_{j-1})$ by applying $Op_j$.
This fact together with the above shows that $\reduced(P')$ is obtained from $\reduced(P)$ by a single deformation of type $\riii$.

\medskip
\noindent
{\bf Case~2}: $\delta=2$ (i.e., $f(I_{\lambda}) \cap f(I_{\mu})$ $\neq$ $\emptyset$, $f(I_{\mu}) \cap f(I_{\nu})$ $=$ $\emptyset$ and $f(I_{\nu}) \cap f(I_{\lambda})$ $=$ $\emptyset$, where $\lambda$, $\mu$ and $\nu$ are mutually distinct numbers in $\{1, 2, 3 \}$).
Without loss of generality, we may suppose that $f(I_1) \cap f(I_2)$ $\neq$ $\emptyset$, $f(I_2) \cap f(I_3)$ $=$ $\emptyset$, and $f(I_3) \cap f(I_1)$ $=$ $\emptyset$.  
 
\medskip
\noindent $\bullet$ Case~2-1: $g(I_3)$ is a simple arc.

By Fact~\ref{factITw}, there exists a sequence of deformations of type $\ri^{-}$ from $P_{j-1}$ to $\reduced(P_{j-1})$.
By the assumption: $f_{c}(P_{j}) \geq f_{c}(P_{j-1})$, we see that $c_{1},c_{2},c_{3}$ are not nugatory (Lemma~\ref{deference_fc}).
Hence $P_{j-1}\cap D$ is not affected by the sequence.
Then, we apply the sequence of deformations of type $\ri^{-}$ to $P_j$ ($\subset S^2$).
Let $\widetilde{P_{j}}$ be the spherical curve obtained from $P_j$ by applying the sequence.
Moreover let $S_{\iii}$ be a spherical curve obtained from $\reduced(P_{j-1})$ by applying a deformation of type $\riii$ to $\reduced(P_{j-1})\cap D$.
Since $f_{c}(P_{j}) \geq f_{c}(P_{j-1})$, we can apply Claim~\ref{cycle} with regarding $Q=P_{j-1}, Q'=P_{j}, R=S_{\iii}$ and $\widetilde{P_{j}}=\widetilde{Q'}$, we have; 

\begin{claim}\label{claim5-2-1}
$\widetilde{P_{j}}$ and $S_{\iii}$ are ambient isotopic.
\end{claim}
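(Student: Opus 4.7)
The plan is to obtain Claim~\ref{claim5-2-1} as an immediate application of Claim~\ref{cycle}, which was established at the end of Section~2 for a generic pair of spherical curves related by a single $\riii$-move performed in a disk $D$ with non-decreasing prime-factor count. Recall that Claim~\ref{cycle} asserts that if $Q'$ is obtained from $Q$ by a single $\riii$-move inside $D$, and if $f_c(Q') \ge f_c(Q)$, then the curve $R$ obtained by first reducing $Q$ to $\reduced(Q)$ via $\ri^{-}$-moves and then performing the same $\riii$-move inside $D$ is ambient isotopic to the curve $\widetilde{Q'}$ obtained by first performing the $\riii$-move and then applying that same sequence of $\ri^{-}$-moves to $Q'$.

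The strategy is therefore to match the data. I will set $Q = P_{j-1}$, $Q' = P_j$, $R = S_{\iii}$, and $\widetilde{Q'} = \widetilde{P_j}$. The single $\riii$-move needed by Claim~\ref{cycle} is $Op_j$, and it is performed in the same disk $D$ that governs the present Case~2 analysis. The inequality $f_c(Q') \ge f_c(Q)$ is precisely the standing hypothesis $f_c(P_j) \ge f_c(P_{j-1})$ of Claim~\ref{claimA}. By Lemma~\ref{deference_fc} this forces each of $c_1, c_2, c_3$ to be non-nugatory, so by Fact~\ref{factITw} the sequence of $\ri^{-}$-moves from $P_{j-1}$ to $\reduced(P_{j-1})$ is disjoint from $D$; this has already been recorded in the paragraph just above Claim~\ref{claim5-2-1} and it is exactly what is needed so that both $S_{\iii}$ (via $\reduced(P_{j-1})$) and $\widetilde{P_j}$ (via applying the same $\ri^{-}$-sequence to $P_j$) are well-defined in the sense required by Claim~\ref{cycle}.

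With the dictionary in place, Claim~\ref{cycle} yields directly that $S_{\iii}$ and $\widetilde{P_j}$ are ambient isotopic, which is the desired conclusion. There is no genuine obstacle: the only step that requires any care is verifying that the $\ri^{-}$-sequence used to define $\widetilde{P_j}$ is the same one used to pass from $P_{j-1}$ to $\reduced(P_{j-1})$ and that this sequence avoids $D$, but both facts are built into the definitions of $\widetilde{P_j}$ and $S_{\iii}$ given immediately before the claim. Thus the proof will consist of one or two sentences spelling out the substitutions $Q \mapsto P_{j-1}$, $Q' \mapsto P_j$, $R \mapsto S_{\iii}$, $\widetilde{Q'} \mapsto \widetilde{P_j}$ and invoking Claim~\ref{cycle}.
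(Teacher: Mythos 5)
Your proposal is correct and is exactly the paper's argument: the paper derives Claim~\ref{claim5-2-1} by invoking Claim~\ref{cycle} with the identical substitutions $Q=P_{j-1}$, $Q'=P_j$, $R=S_{\iii}$, $\widetilde{Q'}=\widetilde{P_j}$, after noting that the hypothesis $f_c(P_j)\ge f_c(P_{j-1})$ and Lemma~\ref{deference_fc} ensure the $\ri^{-}$-sequence avoids $D$. Nothing is missing.
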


By the assumption of Case~2-1, $g(I_3)$ is a simple arc.
Let $E$ be a sufficiently small disk completely including $g(I_3)$ and the $3$-gon of $\reduced(P_{j-1})\cap D$.
We note that the configuration ($E$, $P_{j-1}\cap E$) is the same as that of the right figure of Figure~\ref{alpha} since a simple arc connects a $3$-gon in $E$.
Hence we can apply $\alpha^-$ to $\reduced(P_{j-1})$ in $E$.
Then, we denote 
the resulting spherical curve by $S_{\alpha}$.
Then, by the definition of $\alpha^{-}$, we have;

\begin{claim}\label{S_alphaObtaindedFromS_3}
$S_{\alpha}$ is obtainded from $S_{\iii}$ by a deformation of type $\ri^{-}$ performed within $E$.

\end{claim}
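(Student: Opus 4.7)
The plan is to reduce the claim to the decomposition of $\alpha^-$ already recorded in the Remark after Definition~\ref{alpha_beta}, namely that a deformation of type $\alpha^-$ is a single deformation of type $\riii$ followed by a single deformation of type $\ri^-$. Since $S_\alpha$ is produced from $\reduced(P_{j-1})$ by an $\alpha^-$ move performed inside $E$, and $S_\iii$ is produced from $\reduced(P_{j-1})$ by a $\riii$ move performed inside $D$, everything will come down to identifying the $\riii$ sub-move of the $\alpha^-$ with the $\riii$ move that produces $S_\iii$, and reading off the remaining $\ri^-$.

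First I would pin down the local picture inside $E$. By the hypothesis of Case~2-1, $g(I_3)$ is a simple arc; together with the $3$-gon of $\reduced(P_{j-1})\cap D$ it forms exactly the configuration depicted in the right-hand picture of Figure~\ref{alpha}, with the subdisk $D$ containing the $3$-gon and the simple arc $g(I_3)$ connecting to it. This is what justified applying $\alpha^-$ in the first place.

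Next I would carry out the $\alpha^-$ move in two stages, using the fact that the $\alpha$ deformation is by construction the concatenation of the $\riii$-move on the $3$-gon with a single $\ri^-$-move on the $1$-gon that this $\riii$-move produces. The $\riii$-stage takes place inside $D\subset E$ and is identical to $Op_j$ applied to $\reduced(P_{j-1})\cap D$; by the very definition of $S_\iii$ the result of this stage is $S_\iii$. After this stage the simple arc $g(I_3)$, together with the two new arcs created by the $\riii$-move, bounds a $1$-gon that is completely contained in $E$. The second stage of $\alpha^-$ removes this $1$-gon by a deformation of type $\ri^-$, and the resulting curve is by construction $S_\alpha$. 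Hence $S_\alpha$ is obtained from $S_\iii$ by a single $\ri^-$-move performed within $E$, as desired.

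The only real point that needs care is to check that inside $E$ no further arcs of $\reduced(P_{j-1})$ interfere with the $\alpha$-model, so that the factorisation described in the Remark really applies verbatim; but this is immediate from the choice of $E$ as a sufficiently small disk and from the assumption that $g(I_3)$ is a simple arc. I therefore do not expect any substantive obstacle, and the proof should amount to a short verification of the above factorisation.
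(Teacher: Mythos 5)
Your proposal is correct and is essentially the paper's own argument: the paper justifies this claim simply ``by the definition of $\alpha^{-}$,'' and your two-stage factorisation of the $\alpha^{-}$ move into the $\riii$ move producing $S_{\iii}$ followed by the $\ri^{-}$ removing the resulting $1$-gon is exactly the intended content of that remark. No gap here.
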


Furthermore, we see Claim~\ref{claim4-2-1} below.

\begin{claim}\label{claim4-2-1}
$S_{\alpha}$ is $\ri$-minimal.
\end{claim}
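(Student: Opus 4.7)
My approach is to rule out, by case analysis, any hypothetical $1$-gon face $L$ of $S_{\alpha}$ with boundary arc $\gamma$ and vertex $v$. I use throughout that $S_{\alpha}$ agrees with $\reduced(P_{j-1})$ on $S^{2}\setminus E$, and that inside $E$ the curve $S_{\alpha}$ realizes the left of Figure~\ref{alpha}: two crossings $d_{1},d_{2}$ meeting in a bigon, with four ``outer edges'' running from $\{d_{1},d_{2}\}$ to $\partial E$. The key local observation is that every edge of the $4$-valent graph $S_{\alpha}\cap E$ has at least one of $d_{1},d_{2}$ as an endpoint, so no double-point-free sub-arc of $S_{\alpha}$ inside $E$ can join two distinct points of $\partial E$.

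If $\gamma\subset S^{2}\setminus E$, then either $L\cap E=\emptyset$---in which case $L$ is also a $1$-gon face of $\reduced(P_{j-1})$, contradicting the $\ri$-minimality of $\reduced(P_{j-1})$---or $L\supset E$, in which case the interior of $L$ contains the bigon, contradicting that $L$ is a face. If the closure of $L$ is contained in $E$, then $L$ is a bounded face of $S_{\alpha}\cap E$; by inspection of the left of Figure~\ref{alpha}, the only such face is the bigon itself, a $2$-gon. So both of these cases are impossible.

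The remaining case is that $\gamma$ meets $\partial E$. The local observation above forces every component of $\gamma\cap E$ to contain $v$ as an endpoint, so $v\in\{d_{1},d_{2}\}$ and $\gamma$ meets $\partial E$ in exactly two points $q_{1},q_{2}$; the two inside-$E$ portions of $\gamma$ are the outer edges at $v$ terminating at $q_{1}$ and $q_{2}$, and the outside portion is a single sub-arc of $\reduced(P_{j-1})$ from $q_{1}$ to $q_{2}$ with no interior double points. To finish, I plan to exhibit a corresponding $1$-gon of $\reduced(P_{j-1})$: using the boundary pairings at $\partial E$ (which $\alpha^{-}$ preserves, being supported in $E$) and the explicit combinatorics of the right of Figure~\ref{alpha} (the $3$-gon joined by the simple arc $g(I_{3})$), I would trace inward from $q_{1}$ and $q_{2}$ along strands of $\reduced(P_{j-1})$ until they reach a common crossing $c$ of the $3$-gon, and verify that the loop $c\to q_{1}\to q_{2}\to c$ bounds an empty disk, i.e.\ a $1$-gon face of $\reduced(P_{j-1})$ at $c$. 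This would contradict the $\ri$-minimality of $\reduced(P_{j-1})$.

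The main obstacle is precisely this last step: matching the outer-edge data at $v$ in the left of Figure~\ref{alpha} with the outer-leg data at the crossings of the $3$-gon (and at the endpoints of $g(I_{3})$) in the right of Figure~\ref{alpha}, and checking that the resulting loop really does bound a face. I expect this to follow from a direct combinatorial inspection of the two configurations in Figure~\ref{alpha}, since the possibilities for the strand pairings at $\partial E$ are very restricted by the local form of $\alpha^{-}$.
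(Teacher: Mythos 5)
Your first two cases are sound, and your reduction of the remaining case to the configuration ``$v\in\{d_{1},d_{2}\}$, two outer edges running from $v$ to $q_{1},q_{2}\in\partial E$, plus a double-point-free arc of $\reduced(P_{j-1})$ outside $E$ joining $q_{1}$ to $q_{2}$'' is a correct and more detailed version of what the paper does (the paper simply asserts that a $1$-gon must meet $D$ and must then look like the right side of its Figure~\ref{1-gonG}). The genuine gap is the last step, which you yourself flag as the main obstacle: you never carry out the construction of a $1$-gon of $\reduced(P_{j-1})$, and in fact you are aiming at the wrong contradiction. The paper does not contradict the $\ri$-minimality of $\reduced(P_{j-1})$ at this point; it contradicts the standing hypothesis of Case~2, namely $f(I_{1})\cap f(I_{2})\neq\emptyset$.

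The missing observation is short. Outside $E$ the curve $S_{\alpha}$ coincides with $\reduced(P_{j-1})$ and consists precisely of $g(I_{1})$ and $g(I_{2})$, since $E$ already contains $g(I_{3})$ and the $3$-gon. The outside portion of $\gamma$ is a connected sub-arc of the immersed circle that stays outside $E$, has both endpoints on $\partial E$, and has no double points in its interior; hence its preimage is an entire component of $S^{1}\setminus g^{-1}(E)$, i.e.\ the outside portion is all of $g(I_{1})$ or all of $g(I_{2})$. But a full $g(I_{\lambda})$ with no interior double points is embedded and disjoint from $g(I_{\mu})$, so $g(I_{1})\cap g(I_{2})=\emptyset$, contradicting the Case~2 condition. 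This finishes the proof in two lines and avoids entirely the delicate verification you anticipate (matching outer-edge data across the $\alpha^{-}$ move and checking that a candidate loop of $\reduced(P_{j-1})$ bounds an empty face). Even if you insisted on exhibiting a $1$-gon of $\reduced(P_{j-1})$, you would first need this same observation to know that $q_{1}$ and $q_{2}$ are the two ends of a single $g(I_{\lambda})$ (otherwise the outside arc cannot connect them at all), at which point the direct contradiction is already in hand.
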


\begin{proof}
Assume, for a contradiction, that $S_{\alpha}$ admits a 1-gon, denoted by $G$.
By the construction of $S_{\alpha}$, we see that $G$ is not completely included in $\overline{D}$.
Hence $G\cap D\neq\emptyset$, and this shows that $G$ is as the right side in Figure~\ref{1-gonG}.
However, it is easy to see that the configuration implies $g(I_{1})\cap g(I_{2})=\emptyset$, a contradiction.
\end{proof}

\begin{figure}[htbp]
\centering
\includegraphics[width=8cm]{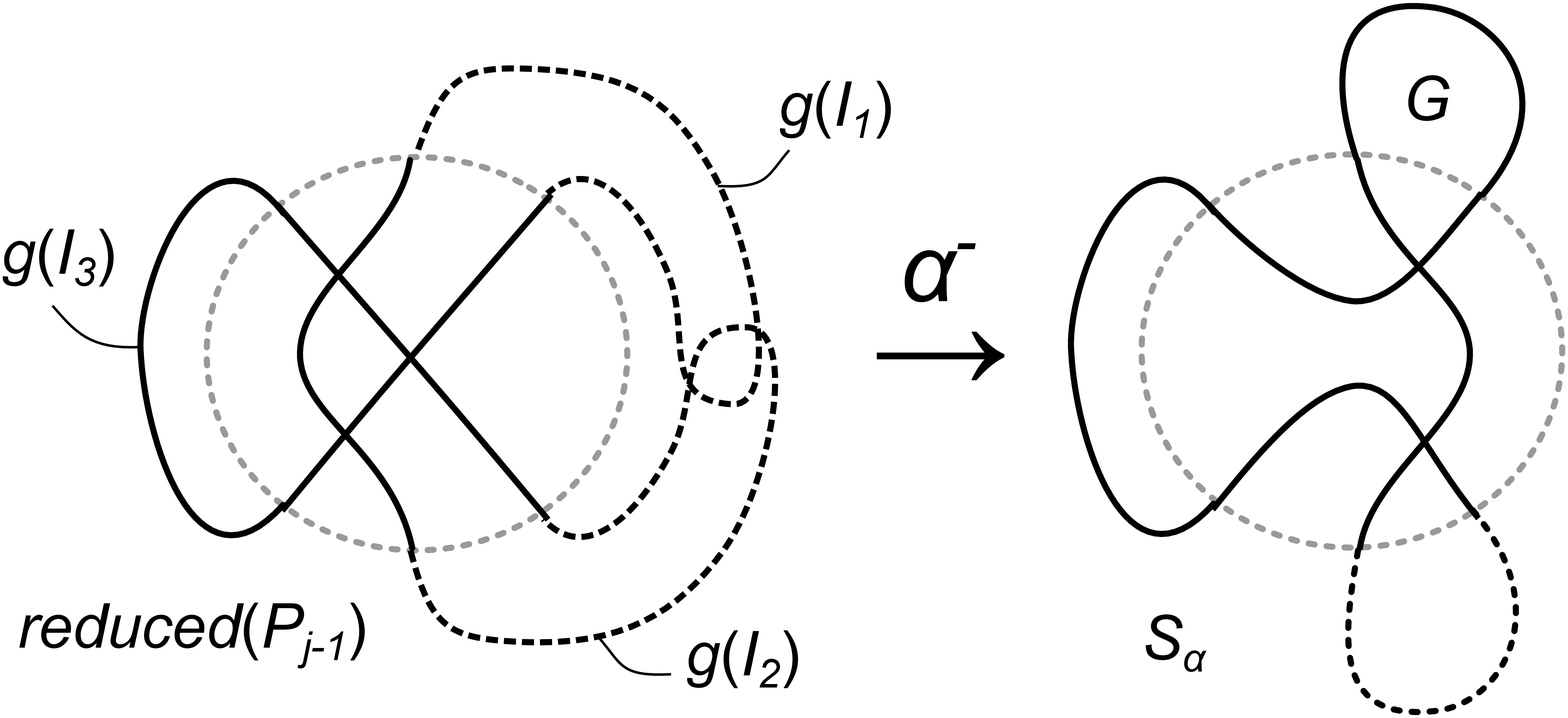}
\caption{}
\label{1-gonG}
\end{figure}


By Fact~\ref{factITw}, $\reduced(P)=\reduced(P_{j-1})$ up to ambient isotopy.
Then $S_{\alpha}$ is regarded as a spherical curve obtained from $\reduced (P)$ by applying a deformation of type ${\alpha^{-}}$ in $E$.
Furthermore, by Claim~\ref{claim4-2-1}, $S_{\alpha}=\reduced (S_{\alpha})$.
Moreover, by Claim~\ref{S_alphaObtaindedFromS_3}, $S_{\alpha}=\reduced(S_{\iii})$ up to ambient isotopy.
By Claim~\ref{claim5-2-1}, $\reduced(S_{\iii})=\reduced(\widetilde{P_j})$ up to ambient isotopy.  
By using Fact~\ref{factITw} and the definition of the sequence from $P_{j-1}$ to $\reduced(P_{j-1})$ again, $\reduced(\widetilde{P_j})=\reduced(P')$ up to ambient isotopy.

These facts show that $\reduced(P')$ is obtained from $\reduced(P)$ by a deformation of type $\alpha^{-}$.

\medskip
\noindent $\bullet$ Case~2-2: $g(I_3)$ is not a simple arc.

After recalling that $g(S^1)=\reduced(P_{j-1})$, in this case, it is easy to show that the arguments in Case~1 work to show that $\reduced(P')$ is obtained from $\reduced(P)$ by a deformation of type $\riii$.
Details of the arguments are left to the reader.


\medskip
\noindent
{\bf Case~3}: $\delta=3$ (i.e., $f(I_{\lambda}) \cap f(I_{\mu})$ $=$ $\emptyset$, $f(I_{\mu}) \cap f(I_{\nu})$ $=$ $\emptyset$ and $f(I_{\nu}) \cap f(I_{\lambda})$ $=$ $\emptyset$, where $\lambda$, $\mu$ and $\nu$ are mutually distinct numbers in $\{1, 2, 3 \}$).

\medskip
\noindent $\bullet$ Case~3-1: For some $\lambda,\mu$ ($\lambda,\mu\in\{1,2,3\},\ \lambda\neq\mu$), each of $g(I_{\lambda})$ and $g(I_{\mu})$ is a simple arc.
Without loss of generality, we may suppose that $(\lambda,\mu)=(1,2)$.

By Fact~\ref{factITw}, there exists a finite sequence of deformations of type $\ri^{-}$ from $P_{j-1}$ to $\reduced(P_{j-1})$.
By the assumption: $f_{c}(P_{j}) \geq f_{c}(P_{j-1})$, we see that $c_{1},c_{2},c_{3}$ are not nugatory (Lemma~\ref{deference_fc}).
Hence $P_{j-1}\cap D$ is not affected by the sequence.
Then, we apply the sequence of deformations of type $\ri^{-}$ to $P_j$ ($\subset S^2$).
Let $\widetilde{P_{j}}$ be the spherical curve obtained from $P_j$ by applying the sequence.
Moreover let $S_{\iii}$ be a spherical curve obtained from $\reduced(P_{j-1})$ by applying a deformation of type $\riii$ to $\reduced(P_{j-1})\cap D$.
By the assumption: $f_{c}(P_{j}) \geq f_{c}(P_{j-1})$, we apply Claim~\ref{cycle} with regarding $Q=P_{j-1}, Q'=P_{j}, R=S_{\iii}$ and $\widetilde{P_{j}}=\widetilde{Q'}$, we have;

\begin{claim}\label{claim5-3-2}
$\widetilde{P_{j}}$ and $S_{\iii}$ are ambient isotopic.
\end{claim}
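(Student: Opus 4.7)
The plan is to observe that Claim~\ref{claim5-3-2} is a direct application of Claim~\ref{cycle}, exactly in the same spirit as Claim~\ref{claim5-2-1} in Case 2-1. So the proof should be essentially a matter of verifying that the hypotheses of Claim~\ref{cycle} are met under the correspondence
\[
Q = P_{j-1}, \quad Q' = P_j, \quad R = S_{\iii}, \quad \widetilde{Q'} = \widetilde{P_j}.
\]

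First, I would check that $Q' = P_j$ is obtained from $Q = P_{j-1}$ by a single deformation of type $\riii$; this holds because by the choice of $j$ at the beginning of the proof of only-if part, $Op_j$ is the unique deformation of type $\riii$ in the sequence. Second, I would note that the inequality $f_c(P_j) \ge f_c(P_{j-1})$ is exactly the standing hypothesis of Claim~\ref{claimA} (which we are currently proving), so the hypothesis $f_c(Q') \ge f_c(Q)$ of Claim~\ref{cycle} is available. In particular, by Lemma~\ref{deference_fc}, none of $c_1, c_2, c_3$ is nugatory, which is precisely what is needed for the reducing sequence from $P_{j-1}$ to $\reduced(P_{j-1})$ to leave $D$ untouched, so that $\widetilde{P_j}$ is well-defined and $S_{\iii}$ is obtained from $\reduced(P_{j-1})$ by the same deformation of type $\riii$ performed inside $D$.

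With these verifications in place, the construction of $R$ and $\widetilde{Q'}$ in the statement of Claim~\ref{cycle} coincides verbatim with the construction of $S_{\iii}$ and $\widetilde{P_j}$ given just before Claim~\ref{claim5-3-2}. Therefore Claim~\ref{cycle} immediately gives that $S_{\iii}$ and $\widetilde{P_j}$ are ambient isotopic, which is exactly the assertion of Claim~\ref{claim5-3-2}. Since this is a direct appeal to Claim~\ref{cycle}, I do not foresee a real obstacle; the only subtlety is to make explicit the observation (used tacitly) that the assumption $\delta = 3$ and the simple-arc hypothesis on $g(I_1)$ and $g(I_2)$ in Case~3-1 play no role in this particular claim—they will only be needed afterward to recognize that the resulting deformation from $\reduced(P)$ to $\reduced(P')$ is of type $\beta^-$ rather than of type $\riii$ or $\alpha^-$.
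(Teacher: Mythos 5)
Your proposal is correct and matches the paper's argument exactly: the paper also derives Claim~\ref{claim5-3-2} by invoking Claim~\ref{cycle} with the substitution $Q=P_{j-1}$, $Q'=P_j$, $R=S_{\iii}$, $\widetilde{Q'}=\widetilde{P_j}$, using the hypothesis $f_c(P_j)\ge f_c(P_{j-1})$ of Claim~\ref{claimA}. Your additional verification that the non-nugatory condition (via Lemma~\ref{deference_fc}) keeps the reducing sequence away from $D$ is exactly the justification the paper gives in the paragraph preceding the claim.
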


By the assumption of Case~3-1, $g(I_{\lambda})$ ($\lambda=1, 2$) is a simple arc.
Let $F$ be a sufficiently small disk completely including $g(I_1)$, $g(I_2)$ and the $3$-gon of $\reduced(P_{j-1})\cap D$.
We note that the configuration ($F$, $\reduced(P_{j-1})\cap F$) is as in Figure~\ref{configurationF,Pj-1capF}.
Then this figure shows that $\reduced(P_{j-1})$ admits a connected sum decomposition $\reduced(P_{j-1})^{\bigstar}\sharp 3_{1}$, 
where $\bigstar$ denotes a certain decomposition.
Here, we note that $\reduced(P_{j-1})^{\bigstar}$ may not be $\ri$-minimal.
For example, we obtained $\reduced(P_{j-1})$ as in the left figure of Figure~\ref{exampleConfigurationF}.
Then the spherical curve admits a connected sum decomposition as in the right figure of Figure~\ref{exampleConfigurationF}.
However, since $\reduced(P_{j-1})$ is $\ri$-minimal, we see that $\reduced(P_{j-1})^{\bigstar}\sharp 3_{1}$ may be further decompose into $\reduced(P_{j-1})^{\bigstar\bigstar} \sharp \infty^{m} \sharp 3_{1}$, where $\reduced(P_{j-1})^{\bigstar\bigstar}$ is $\ri$-minimal.
These show that $\reduced(P_{j-1})$ is obtained from $\reduced(P_{j-1})^{\bigstar\bigstar}$ by deformation of type $\beta^{+}$.

On the other hand, since $S_{\iii}$ is obtained $\reduced(P_{j-1})$ by deformation of type $\riii$, it is directly obtained from Figure~\ref{configurationF,Pj-1capF}, that $\reduced(S_{\iii})$ and $\reduced(P_{j-1})^{\bigstar\bigstar}$ are ambient isotopic.

These facts together with Claim~\ref{claim5-3-2} shows that $\reduced(\widetilde{P_{j}})$ is obtained from $\reduced(P_{j-1})$ by a deformation of type $\beta^{-}$.
Hence $\reduced(P')$ is obtained from $\reduced(P)$ by a deformation of type $\beta^{-}$.


\begin{figure}[htbp]
\centering
\includegraphics[width=6cm]{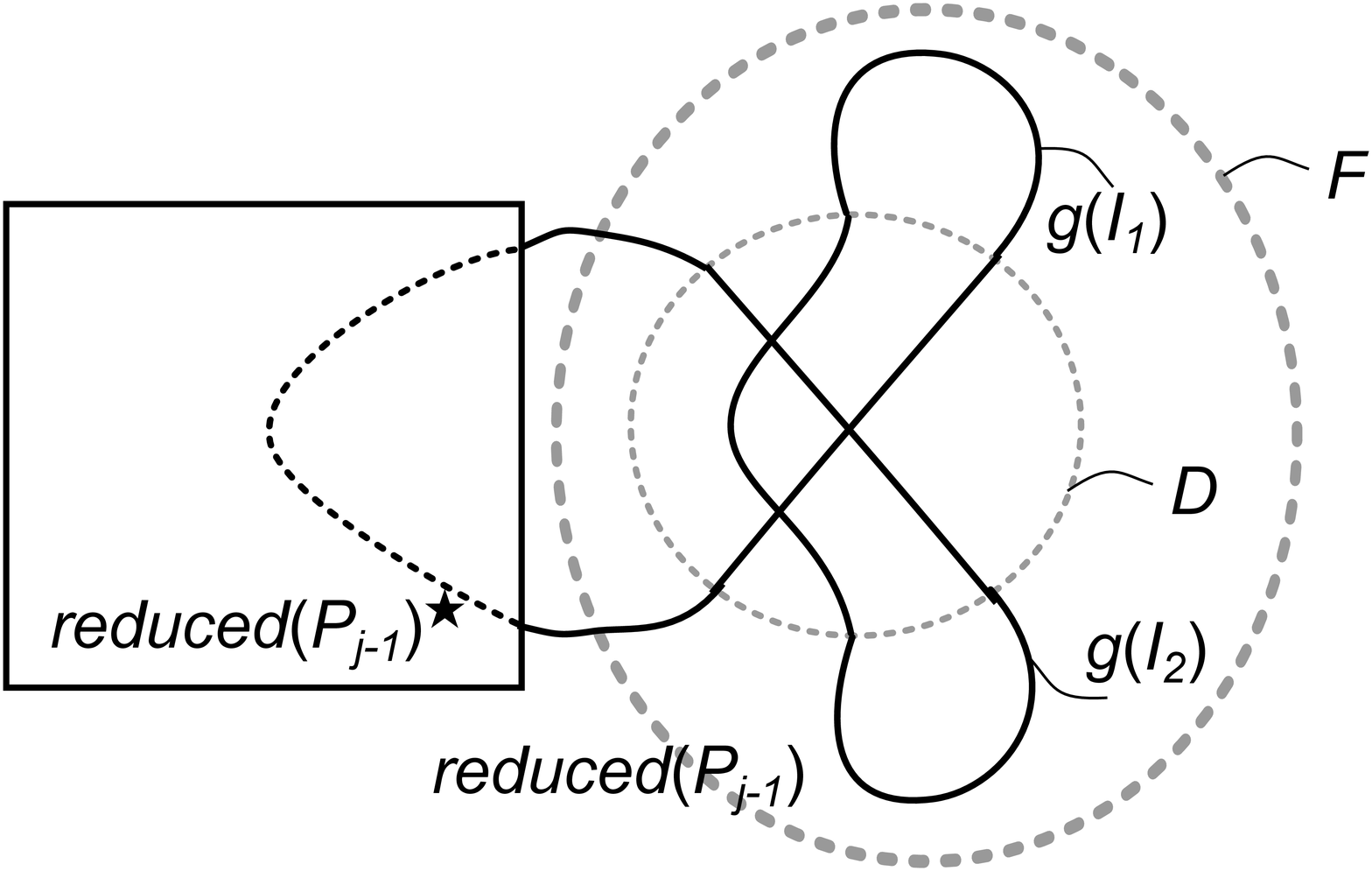}
\caption{}
\label{configurationF,Pj-1capF}
\end{figure}

\begin{figure}[htbp]
\centering
\includegraphics[width=13cm]{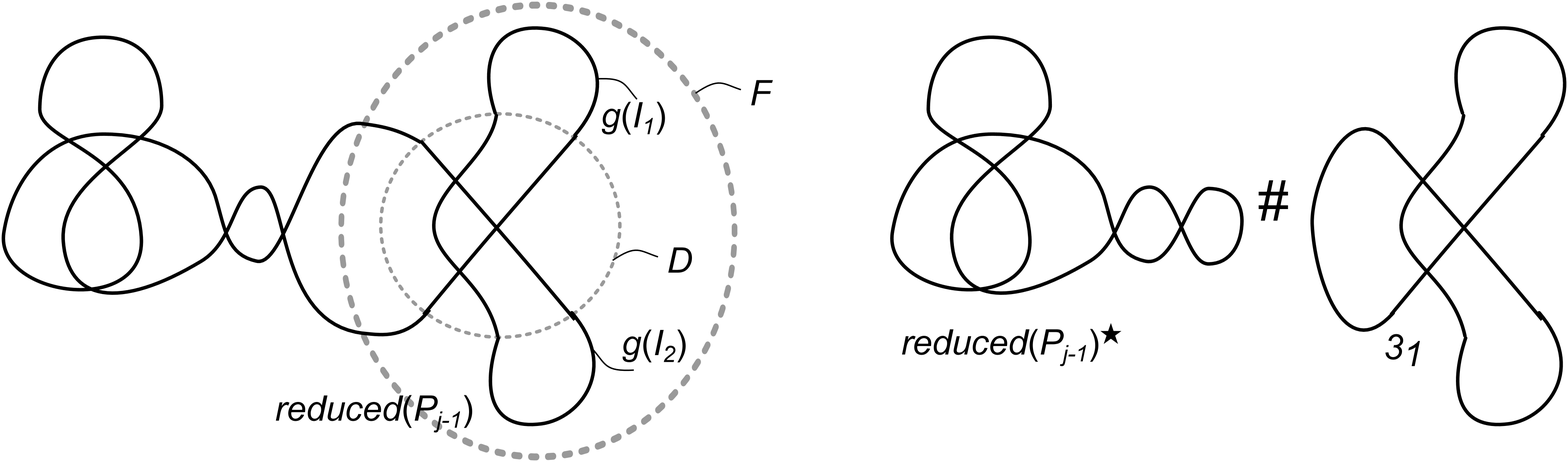}
\caption{}
\label{exampleConfigurationF}
\end{figure}

\medskip
\noindent $\bullet$ Case~3-2: 
$g(I_{\lambda})$ is a simple arc each of the others is not a simple arc.
Without loss of generality, we may suppose that $g(I_{1})$, $g(I_{2})$ are not simple arcs by and $g(I_3)$ is a simple arc.

In this case, it is easy to see that the arguments in Case~2-1 works to show that $\reduced(P')$ is obtained from $\reduced(P)$ by a single deformation of type ${\alpha}^{-}$.
Details are left to the reader.


\medskip
\noindent $\bullet$ Case~3-3: Every $g(I_{\lambda})$ ($\lambda = 1, 2, 3$) is not a simple arc.

The treatment of this case is essentially the same as the arguments in Case~1.
The difference is;

In Case~1, we appealed the connectedness of $P_{j-1}\setminus D$ (i.e., $\delta=1$), for contradictions.
Here we should use the condition of Case~3-3 for the connectedness of $P_{j-1}\setminus D$ to derive contradictions.

Then we can show that $\reduced(P')$ is obtained from $\reduced(P)$ by deformation of type $\riii$.

For every case, the statement is proved, which completes the proof.

$\hfill\Box$

\section*{Acknowledgements}
The authors would like to thank Professor Tsuyoshi Kobayashi for giving many advice.
The authors also thank Professor Kouki Taniyama for comments. 
The authors also thank Mr.~Yusuke Takimura for sharing his table and comments.
M.~H. is a researcher supported by Meiji University Organization for the Strategic Coordination of Research
 and Intellectual Properties.


\begin{thebibliography}{99}
\bibitem{FHIKM} Y.~Funakoshi, M.~Hashizume, N.~Ito, T.~Kobayashi, and H.~Murai, A distance on the equivalence classes of spherical curves generated by deformations of type RI, {\it J. Knot Theory Ramifications}, {\bf 27}, No.12 (2018), 1850066, 22pp.

\bibitem{HY} T.~Hagge and J.~Yazinski, On the necessity of Reidemeister move 2 for simplifying immersed planar curves, {\it{Banach Center Publ.\ }}{\bf{103}} (2014), 101--110.
\bibitem{Ito} N.~Ito, Knot projections, \emph{CRC Press, Boca Raton, FL, } 2016.  
\bibitem{ITw} N. Ito and Y. Takimura, (1, 2) and weak (1, 3) homotopies on knot projections, \emph{J.~Knot Theory Ramifications} {\bf 22} (2013), 135--85, 14pp.    
\bibitem{IT3} N. Ito and Y. Takimura, Sub-chord diagrams of knot projections, {\it{Houston J. Math.}} {\bf{41}} (2015), 701--725.
\bibitem{IT} N.~Ito and Y.~Takimura, On a nontrivial knot projection under (1, 3) homotopy, \emph{Topology Appl.} 210 (2016), 22--28.  

\bibitem{ITTT} N.~Ito and Y.~Takimura, $\rii$ number of knot projections, {\it{Kobe J. Math.}}, accepted.  

\bibitem{ITT} N.~Ito, Y.~Takimura, and K.~Taniyama, Strong and weak (1, 3) homotopies on knot projections, {\it{Osaka J. Math.\ }} {\bf{52}} (2015), 617--646.   
\bibitem{K} M.~Khovanov, Doodle groups, {\it Trans. Amer. Math. Soc.}, {\bf 349}, (1997), 2297--2315.


\bibitem{KK} T.~Kobayashi and S.~Kobayashi, Stable Double point numbers of pairs of spherical curves, {\it JP J. of Geometry and Topology}, {\bf 22}, No. 2 (2019), 129--163.

\bibitem{ostlund} Olof-Peter \"{O}stlund,  Invariants of knot diagrams and diagrammatic knot invariants.  Thesis (Ph.D.)--Uppsala Universitet (Sweden).  \emph{ProQuest LLC, Ann Arbor, MI}, 2001. 67 pp.   
\end{thebibliography}
\end{document}